\title{Superconvergence Using Pointwise Interpolation in
       Convection-Diffusion Problems}
\author{Sebastian Franz\thanks{
           Institut f\"ur Numerische Mathematik, Technische Universit\"at Dresden,
           01062 Dresden, Germany.
           \texttt{sebastian.franz@tu-dresden.de}\newline}}
\date{\today}
\let\my@saved@original@eqref\eqref 
\renewcommand*{\eqref}[1]{
  \begingroup
    \let\normalfont\relax
    \my@saved@original@eqref{#1}
  \endgroup
}
\renewcommand*\env@matrix[1][r]{\hskip -\arraycolsep
  \let\@ifnextchar\new@ifnextchar
  \array{*\c@MaxMatrixCols #1}}
\DeclareMathOperator{\meas}{meas}
\newcommand{\e}{\mathrm{e}}
\newcommand{\laplace}{\Delta}
\newcommand{\grad}{\nabla}
\newcommand{\pt}{\partial}
\newcommand{\eps}{\varepsilon}
\newcommand{\norm}[2]{\|{#1}\|_{#2}}
\newcommand{\snorm}[2]{|{#1}|_{#2}}
\newcommand{\tnorm}[1]{\left|\!\!\;\left|\!\!\;\left| {#1}
                       \right|\!\!\;\right|\!\!\;\right|}
\newcommand{\enorm}[1]{\tnorm{#1}_\eps}
\newcommand{\half}{\frac{1}{2}}
\newcommand{\R}{\mathbb{R}}
\newcommand{\PS}{\mathcal{P}}
\newcommand{\QS}{\mathcal{Q}}
\theoremstyle{plain}
\newtheorem{thm}{Theorem}[section]
\newtheorem{lem}[thm]{Lemma}
\newtheorem{ass}[thm]{Assumption}
\newtheorem{cor}[thm]{Corollary}
\newtheorem{rem}[thm]{Remark}
\begin{document}
 \pagestyle{fancy}
  \maketitle
  \begin{abstract}
     Considering a singularly perturbed convection-diffusion problem,
     we present an analysis for a superconvergence result using pointwise
     interpolation of Gau\ss-Lobatto type for higher-order
     streamline diffusion FEM.
     We show a useful connection between two different types of interpolation,
     namely a vertex-edge-cell interpolant and a pointwise interpolant.
     Moreover, different postprocessing operators are analysed and applied
     to model problems.
  \end{abstract}

  \textit{AMS subject classification (2000):}
   65N12, 65N30, 65N50.

  \textit{Key words:} singular perturbation,
                      layer-adapted meshes,
                      superconvergence,
                      postprocessing

 \section{Introduction}
  Consider the convection-diffusion problem
  given by
  \begin{subequations}\label{eq:Lu}
  \begin{align}
   -\eps\laplace u-b u_x+cu&=f,\quad\mbox{in }\Omega=(0,1)^2\\
    u&=0,\hspace*{0.4cm}\mbox{on }\partial\Omega
  \end{align}
  \end{subequations}
  where $b\geq\beta>0$, $c+\half b_x\geq\gamma>0$ and $0<\eps\ll1$. Note that the condition on $c$ can
  always be fulfilled by a transformation $v=\exp(\kappa x)u$ for a suitably chosen $\kappa$.

  In \cite{FrL06} it was shown that for bilinear elements and a standard Galerkin method
  its solution $u^N$ fulfils on a piecewise uniform Shishkin mesh with $N$ mesh cells in each coordinate direction
  the estimates
  \begin{gather}\label{eq:supercloseness}
   \enorm{u-u^N}\leq C N^{-1}\ln N
   \quad\mbox{and}\quad
   \enorm{u^I-u^N}\leq C(N^{-1}\ln N)^2,
  \end{gather}
  where $u^I$ is the standard piecewise bilinear interpolant of $u$ and
  the energy-norm $\enorm{\cdot}$ is defined as
  \[
   \enorm{u}=(\eps\norm{\grad u}{0}^2+\gamma\norm{u}{0}^2)^{1/2}.
  \]
  Here and throughout the paper we denote by $\norm{\cdot}{0,D}$ the standard $L_2$-norm on $D\subset\Omega$
  and by $C$ a generic constant independent of $\eps$ and $N$.
  Whenever $D=\Omega$ we skip the explicit reference on the domain.
  
  A property like \eqref{eq:supercloseness} is called \emph{supercloseness}.
  It can be exploited as \emph{interpolantwise superconvergence}
  (for the naming convention see \cite{RST08})
  with a simple postprocessing routine \cite{FrL06}.
  The result is a better numerical solution $Pu^N$ fulfilling
  \[
   \enorm{u-Pu^N}\leq C(N^{-1}\ln N)^2.
  \]

  In \cite{FrLR06} a similar result was obtained for a streamline diffusion method \cite{HB79}
  under some restrictions on the stabilisation parameters. For higher order methods
  using $\QS_p$-elements with $p>1$
  so far only for the streamline diffusion method supercloseness results are known.
  In \cite{Fr11} it was proven that
  \[
   \enorm{\pi_p^N u-u^N}\leq C(N^{-1}\ln N)^{p+1/2}\ln N
  \]
  holds for the streamline diffusion solution $u^N$ in the case of $\QS_p$-elements
  on a suitable piecewise uniform Shishkin mesh and conditions on the stabilisation
  parameters. The interpolant $\pi_p^N$ is a so called vertex-edge-cell interpolant~\cite{Lin91,GR86}.

  In \cite{Fr12} the higher order case was investigated numerically and three unproven
  phenomena were shown. First, there seems to be a supercloseness result for pointwise
  interpolation w.r.t. Gau\ss-Lobatto points. Second, the supercloseness order is actually
  $p+1$ and not only $p+1/2$. And finally, these results do also hold for standard, unstabilised  Galerkin FEM.
  In the present paper we prove the first of these numerical results.

  The theoretical results presented in this paper require regularity of the exact solution $u$ and
  use solution decompositions given e.g. in \cite{KellSt05,KellSt07,LS01a}.
  We assume the necessary compatibility conditions and smoothness of the data to be fulfilled.

  The paper is organised as follows. In Section~\ref{sec:mesh} we introduce a class of layer-adapted
  meshes used in discretising the differential equation. Moreover, a solution decomposition
  exploited later in the analysis is presented. Section~\ref{sec:interpolation} contains
  the definition and analysis of some properties of two different interpolation operators.
  In Section~\ref{sec:analysis} we prove supercloseness properties
  of our two interpolation operators and in the following Section~\ref{sec:postprocessing}
  superconvergent numerical solutions are generated by postprocessing. Finally, Section~\ref{sec:numerics}
  contains a numerical example verifying the theoretical results.
  
\section{The Mesh and a Solution Decomposition}\label{sec:mesh}
  We define the underlying mesh as a member of the
  general class of S-type meshes \cite{RoosLins99}.
  Let the number $N\geq 4$ of mesh cells in each
  direction be divisible by 4 and
  a user-chosen positive parameter $\sigma>0$ be given. Assume
  \begin{gather}\label{eq:ass:eps}
   \eps\leq\frac{1}{(4\sigma\ln N)^2}.
  \end{gather}
  In practice, this assumption is no restriction as otherwise $N$ would be exponentially large
  compared with $\eps$. In the latter case the analysis could be done in a standard and much simpler way.
  We now define mesh transition parameters by
  \begin{gather}\label{eq:ass:trans}
    \lambda_x :=\frac{\sigma\eps}{\beta}\ln N \le \frac{1}{2}
   \quad\text{and}\quad
    \lambda_y :=\sigma\sqrt{\eps}\ln N          \le \frac{1}{4}.
  \end{gather}
  The domain $\Omega$ is dissected by a tensor product mesh
  according to
  \begin{align*}
   x_i&:=\begin{cases}
          \frac{\sigma\eps}{\beta_1}\phi\left(\frac{i}{N}\right),
                                          &i=0,\dots,N/2,\\
          1-2(1-\lambda_x)(1-\frac{i}{N}),&i=N/2,\dots,N,
         \end{cases}\\
   y_j&:=\begin{cases}
          \sigma\sqrt{\eps}\phi\left(\frac{2j}{N}\right),  &j=0,\dots,N/4,\\
          (1-2\lambda_y)(\frac{2j}{N}-1)+\half,            &j=N/4,\dots,3N/4,\\
          1-\sigma\sqrt{\eps}\phi\left(2-\frac{2j}{N}\right),&j=3N/4,\dots,N,
         \end{cases}
  \end{align*}
%
%
  and the final mesh $T^N$ is constructed by drawing lines parallel to the coordinate axes
  through these mesh points. 
  The function $\phi$ is a monotonically increasing, mesh-generating function satisfying
  $\phi(0)$=0 and $\phi(1/2)$=$\ln N$. Given an arbitrary function $\phi$ fulfilling
  these conditions, an S-type mesh is defined
  and the domain $\Omega$ is divided into the subdomains $\Omega_{11}$,
  $\Omega_{12}$, $\Omega_{21}$, and $\Omega_{22}$ as shown in
  Figure~\ref{fig:mesh}.
  \begin{figure}[tb]
   \begin{center}
     \begin{minipage}[c]{0.35\textwidth}
      \vspace*{0cm}
      \setlength{\unitlength}{0.42pt}
      \begin{picture}(256,256)
       \put(  0,  0){\line( 0, 1){256}}
       \put(  4,  0){\line( 0, 1){256}}
       \put(  8,  0){\line( 0, 1){256}}
       \put( 12,  0){\line( 0, 1){256}}
       \put( 76,  0){\line( 0, 1){256}}
       \put(136,  0){\line( 0, 1){256}}
       \put(196,  0){\line( 0, 1){256}}
       \put(256,  0){\line( 0, 1){256}}

       \put(  0,  0){\line( 1, 0){256}}
       \put(  0, 19){\line( 1, 0){256}}
       \put(  0, 83){\line( 1, 0){256}}
       \put(  0,128){\line( 1, 0){256}}
       \put(  0,173){\line( 1, 0){256}}
       \put(  0,237){\line( 1, 0){256}}
       \put(  0,256){\line( 1, 0){256}}

       \linethickness{1pt}
       \put( 16,  0){\line( 0, 1){256}}
       \put(  0, 37){\line( 1, 0){256}}
       \put(  0,217){\line( 1, 0){256}}

       \put(-5,-5){\makebox(0,0)[t]{$0$}}
       \put(16,-5){\makebox(0,0)[t]{$\lambda_x$}}
       \put(-5,38){\makebox(0,0)[r]{$\lambda_y$}}
       \put(-5,218){\makebox(0,0)[r]{$1-\lambda_y$}}
       \put(256,-5){\makebox(0,0)[t]{$1$}}
       \put(-5,256){\makebox(0,0)[r]{$1$}}

      \end{picture}
     \end{minipage}
     \begin{minipage}[c]{0.5\textwidth}
      \begin{align*}
       \Omega_{11}&:=[\lambda_x,1]\times [\lambda_y,1-\lambda_y],\\
       \Omega_{12}&:=[0,\lambda_x]\times [\lambda_y,1-\lambda_y],\\
       \Omega_{21}&:=[\lambda_x,1]\times \big([0,\lambda_y]\cup[1-\lambda_y,1]\big),\\
       \Omega_{22}&:=[0,\lambda_x]\times \big([0,\lambda_y]\cup[1-\lambda_y,1]\big)\\
      \end{align*}
     \end{minipage}
   \end{center}
   \caption{Shishkin mesh $T^8$ of $\Omega$, the bold lines indicate the boundaries
   of the subdomains.\label{fig:mesh}}
  \end{figure}
  Related to the mesh-generating function $\phi$, we define the mesh-characterising function
  $
    \psi=e^{-\phi}.
  $
  Its derivative yields information on
  the approximation quality of the mesh, usually expressed in terms of $\max|\psi'|:=\max\{|\psi'(t)|,\,t\in[0,1/2]\}$.
  Several examples can be found in \cite{RoosLins99}. In this paper we refer to two of them
  repeatedly. Those are
  the piecewise uniform Shishkin-mesh with $\phi(t)=2t\ln N$ and $\max|\psi'|\leq 2\ln N$, 
  and the Bakhvalov--Shishkin mesh with $\phi(t)=-\ln(1-2t(1-N^{-1}))$ and $\max|\psi'|\leq 2$. 
  Besides above properties, we assume the function $\phi$ also to fulfil
  \[
   \max_{t\in[0,1/2]}\phi'(t)\leq CN
   \quad\mbox{and}\quad
   \min_{i=1,\dots,N/2}\phi\left(\frac{i}{N}\right)-\phi\left(\frac{i-1}{N}\right)\geq CN^{-1}.
  \]
  With the help of the first property, the mesh sizes in the fine-mesh region
  can be estimated, see \eqref{eq:hi}. The second one is used in estimating
  the interpolation error \cite[Theorem 12]{FrM10} when applying an inverse inequality.
  Both properties are fulfilled for the Shishkin and the Bakhvalov-Shishkin mesh,
  and many others.

  We denote by $\tau_{ij}=[x_{i-1},x_i]\times[y_{j-1},y_j]$ a specific element
  and by $\tau$ a generic mesh rectangle. Note that the mesh cells are assumed
  to be closed. Let $h_i:=x_i-x_{i-1}$, $k_j:=y_j-y_{j-1}$ be the dimensions of $\tau_{ij}$ and
  \[
     h:=\max_{i=1,\dots,N/2} h_i,\quad
     k:=\max_{j=1,\dots,N/4} k_j\quad\mbox{and}\quad
      h_{min}:=\min_{i=1,\dots,N/2} h_i.
  \]
  Note that it holds, \cite[eq. (3.5)]{RoosLins99}
  \begin{gather}\label{eq:hi}
   h_i\leq C\eps N^{-1}\max|\psi'|e^{\beta x/(\sigma\eps)},\quad i=1,\dots,N/2,\,\,x\in [x_{i-1},x_i],
  \end{gather}
  and similarly for $k_j$.

  Let for a fixed polynomial degree $p\geq 2$ the finite element space be given by
  \[
   V^N=\left\{v\in H_0^1(\Omega): v|_\tau\in \QS_p(\tau)\,\forall\tau\in T^N \right\}.
  \]
%
  \begin{ass}\label{ass:dec}
   The solution $u$ of~\eqref{eq:Lu} can be decomposed as
   \begin{gather*}
    u =v+w_1+w_2+w_{12},
   \end{gather*}
   where for all $x,y\in[0,1]$ and $0\le i+j\le p+1$ the
   pointwise estimates
   \begin{equation}\label{eq:dec:C0}
    \left.
      \begin{aligned}
        \left|\frac{\partial^{i+j} v}{\partial x^i \partial y^j}(x,y)\right|
             &\le C,
        \quad
        \left|\frac{\partial^{i+j} w_1}{\partial x^i \partial y^j}(x,y)\right|
              \le C\eps_{}^{-i}\e_{}^{-\beta x/\eps},\\[0.2cm]
        \left|\frac{\partial^{i+j} w_2}{\partial x^i \partial y^j}(x,y)\right|
             &\le C\eps_{}^{-j/2}
                \left(\e_{}^{-y/\sqrt\eps}+\e_{}^{-(1-y)/\sqrt\eps} \right), \\[0.2cm]
        \left|\frac{\partial^{i+j} w_{12}}{\partial x^i \partial y^j}(x,y)\right|
             &\le C\eps^{-(i+j/2)} \e^{-\beta x/\eps}
                    \left(\e^{-y/\sqrt\eps}+\e^{-(1-y)/\sqrt\eps}\right)
      \end{aligned}
    \right\}
   \end{equation}

   and for $i+j=p+2$ the $L_2$-norm bounds
   \begin{equation}\label{eq:dec:L2}
    \left.
      \begin{aligned}
        \norm{\partial_x^i\partial_y^jv  }{0,\Omega}&\leq C,
        \hspace*{2.05cm}
        \norm{\partial_x^i\partial_y^jw_1}{0,\Omega} \leq C\eps^{-i+1/2},\\[0.2cm]
        \norm{\partial_x^i\partial_y^jw_2}{0,\Omega}&\leq C\eps^{-j/2+1/4},
        \quad
        \norm{\partial_x^i\partial_y^jw_{12}}{0,\Omega}\leq C\eps^{-(i+j/2)+3/4}
      \end{aligned}
    \right\}
   \end{equation}
   hold.
   Here $w_1$ covers the exponential boundary layer, $w_2$ the characteristic
   boundary layers, $w_{12}$ the corner layers, and $v$ is the regular part.
  \end{ass}
%
  \begin{rem}\label{rem:plaus}
    In \cite{KellSt05,KellSt07} Kellogg and Stynes proved the validity of
    Assumption~\ref{ass:dec} for constant functions $b, c$ under certain
    compatibility and smoothness conditions on $f$.
  \end{rem}

\section{Interpolation}\label{sec:interpolation}
  We define two different interpolation operators.
  The first one \cite{GR86,Lin91} is the vertex-edge-cell interpolation
  operator $\hat\pi_p:C(\hat\tau)\to{Q}_p(\hat\tau)$,
  defined locally on the reference element $\hat\tau:=[-1,1]^2$ by
  \begin{subequations}\label{eq:Lin:def}
  \begin{alignat}{2}
   (\hat\pi_p \hat{v}-\hat{v})(\hat{a}_i)&=0,\,\quad i=1,\dots,4,
   &&\label{eq:general:def_inter1}\\
   \int_{\hat{e}_i}(\hat\pi_p\hat{v}-\hat{v})\hat{q} &= 0,
   \quad i=1,\dots,4,\quad
   &&\hat{q}\in {P}_{p-2}(\hat{e}_i),\label{eq:general:def_inter2}\\
   \iint_{\hat{\tau}} (\hat\pi_p\hat{v}-\hat{v})\hat{q} &= 0,
   &&\hat{q}\in {Q}_{p-2}(\hat{\tau}),\label{eq:general:def_inter3}
  \end{alignat}
  where $\hat a_i$ are the vertices and $\hat e_i$ the edges of $\hat\tau$.
  Using the bijective reference mapping $F_\tau:\hat{\tau}\to\tau$,
  this operator can be extended to the global interpolation operator
  $\pi_p^N:C(\overline{\Omega})\to V^N$ by
  \begin{equation}\label{eq:def_inter_int}
   (\pi_p^N v)|_\tau:= (\hat\pi_p(v\circ F_\tau)) \circ F_\tau^{-1},
   \quad\forall\tau\in T^N,\,v\in
   C(\overline{\Omega}).
  \end{equation}
  \end{subequations}

  The second interpolation operator is of Lagrange-type.
  Let $-1=t_0<t_1<\dots<t_p=1$, be the zeros of
  \begin{subequations}\label{eq:GL:def}
  \begin{gather}\label{eq:GL:nodes}
    (1-t^2)L_{p}'(t)=0,\quad t\in[-1,1],
  \end{gather}
  where $L_p$ is the Legendre polynomial of degree $p$.
  These points are also used in the Gau\ss-Lobatto
  quadrature rule of approximation order $2p-1$. Therefore, we refer
  to them as Gau\ss-Lobatto points.
  In literature they are also named Jacobi points \cite{Li04} as they are also the zeros
  of the orthogonal Jacobi-polynomials $P_p^{(1,1)}$ of order $p$.

  The operator $\hat I_p:C(\hat\tau)\to{Q}_p(\hat\tau)$ is then defined on the reference element $\hat\tau$
  by point evaluations
  \begin{gather}\label{eq:GL:def_loc}
    (\hat I_p\hat v)(t_i,t_j)=\hat v(t_i,t_j),\quad i,\,j=0,\dots,p.
  \end{gather}
  \end{subequations}
  With an extension like \eqref{eq:def_inter_int} we obtain the global interpolation operator $I^N_p:C(\overline{\Omega})\to V^N$.
  The interpolation error for both operators can be
  bounded according to \cite{FrM10,FrM10_1} using Assumption~\ref{ass:dec}.
%
  \begin{thm}\label{thm:interpolation}
   Let $\sigma\geq p+1$. Then it holds for the solution $u$ of \eqref{eq:Lu}
   \[
    \enorm{u-I^N_pu}\leq C(h+k+N^{-1}\max |\psi'|)^p
    \quad\mbox{and}\quad
    \enorm{u-\pi_p^Nu}\leq C(h+k+N^{-1}\max |\psi'|)^p.
   \]
  \end{thm}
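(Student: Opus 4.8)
The plan is to prove the two interpolation error bounds via the standard decompose-and-estimate strategy. First I would invoke the solution decomposition $u=v+w_1+w_2+w_{12}$ from Assumption~\ref{ass:dec} and, since both interpolation operators are linear, estimate the energy-norm error $\enorm{u-I^N_p u}$ (and similarly for $\pi_p^N$) componentwise: one bound each for the regular part $v$, the exponential layer $w_1$, the characteristic layers $w_2$, and the corner layer $w_{12}$. This reduces the global estimate to purely local, element-by-element computations on a generic rectangle $\tau_{ij}$, after which one sums the local contributions over the four mesh subregions $\Omega_{11},\Omega_{12},\Omega_{21},\Omega_{22}$.

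On a single element the work rests on the standard anisotropic interpolation-error estimates referenced from \cite{FrM10,FrM10_1}. The key point is that both $\hat\pi_p$ and $\hat I_p$ reproduce $\QS_p$ exactly on the reference element, so by the Bramble--Hilbert lemma their local errors are controlled by the $(p+1)$-st order derivatives (and for the $L_2$ pieces, in terms of $(p+2)$-nd derivatives via the bounds~\eqref{eq:dec:L2}). Scaling these reference estimates back to $\tau_{ij}$ through $F_\tau$ introduces the mesh widths $h_i,k_j$ to suitable powers. For each solution component I would then insert the pointwise derivative bounds~\eqref{eq:dec:C0}: on the coarse part of the mesh the exponential factors $\e^{-\beta x/\eps}$ and $\e^{-y/\sqrt\eps}$ are tiny (this is where the hypothesis $\sigma\ge p+1$ is needed, to force the layer remainders below the target order), while on the fine part the mesh widths satisfy the crucial bound~\eqref{eq:hi}, namely $h_i\le C\eps N^{-1}\max|\psi'|\,\e^{\beta x/(\sigma\eps)}$, so that the products of small mesh widths with large derivative factors telescope to the desired power of $N^{-1}\max|\psi'|$.

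For the regular part $v$, whose derivatives are uniformly bounded, the estimate is immediate: the error is $\ord{(\hbar+\kbar)^p}$ from the standard scaling. The exponential-layer term $w_1$ is estimated using that $x$-widths are $\ord{\eps N^{-1}\max|\psi'|}$ on $\Omega_{12}\cup\Omega_{22}$ and that $\e^{-\beta x/\eps}\le N^{-\sigma}$ on $\Omega_{11}\cup\Omega_{21}$; the characteristic-layer term $w_2$ is treated symmetrically in the $y$-direction; and the corner term $w_{12}$ combines both. Collecting all four contributions and taking the energy norm, which weights the gradient by $\sqrt\eps$, yields the claimed order $(\hbar+\kbar+N^{-1}\max|\psi'|)^p$. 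I expect the main obstacle to be the careful bookkeeping of the anisotropic scaling on the layer regions: one must track which derivative directions pick up which mesh widths, and verify that the $\sqrt\eps$ weighting in $\enorm{\cdot}$ exactly compensates the $\eps$-powers carried by the layer derivatives so that the final bound is $\eps$-uniform. The argument for $\pi_p^N$ runs in parallel, differing only in the specific reference-element interpolation estimates used, which are likewise supplied by \cite{FrM10,FrM10_1}.
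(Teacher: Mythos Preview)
Your proposal is correct and follows precisely the approach the paper relies on: the paper does not give an explicit proof but states just before the theorem that the bounds follow from \cite{FrM10,FrM10_1} together with the solution decomposition of Assumption~\ref{ass:dec}, which is exactly the decompose-and-estimate strategy with anisotropic local interpolation bounds that you outline. Your identification of the roles of $\sigma\ge p+1$, the mesh-width estimate~\eqref{eq:hi}, and the $\sqrt\eps$ weighting in the energy norm is accurate and matches the machinery in those references.
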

  
  \begin{rem}
    Note that
    \[
     \max|\psi'|\leq \begin{cases}
                      2 \ln N,&\mbox{Shishkin mesh},\\
                      2, &\mbox{Bakhvalov--Shishkin mesh},
                     \end{cases}
    \]
    which shows the improvement of the bounds using graded meshes near the boundaries.
  \end{rem}

  \begin{lem}\label{lem:connection}
   Let $\hat\pi_p$ and $\hat I_p$ be the local interpolation operators into $\QS_p(\hat \tau)$
   on the reference element $\hat\tau$
   defined in \eqref{eq:Lin:def} and \eqref{eq:GL:def}, respectively.
   Furthermore, let $\hat \pi_{p+1}$ be defined similarly to \eqref{eq:Lin:def} interpolating into
   the local space $\QS_{p+1}(\hat\tau)$. Then it holds
   \begin{gather}\label{eq:equivalence:1}
    \hat\pi_p=\hat I_p\hat\pi_{p+1}.
   \end{gather}
   Moreover, if $\hat I_{p+1}^*$ is a Lagrange-interpolant into
   $\QS_{p+1}(\hat\tau)$ using the interpolation nodes $(t_i^*,t_j^*)$ where
   $\{t_i^*\},\, i=0,\dots,p+1$ consists of the $p+1$ Gau\ss-Lobatto nodes
   $\{t_i\},\,i=0,\dots,p$\, from \eqref{eq:GL:nodes}
   plus one arbitrary node $t_{p+1}^*\in(-1,1)$, then it follows
   \begin{gather}\label{eq:equivalence:2}
    \hat I_p=\hat \pi_p\hat I_{p+1}^*.
   \end{gather}
  \end{lem}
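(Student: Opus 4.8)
The plan is to prove both identities by showing that the two operators on each side agree on all polynomials in $\QS_p(\hat\tau)$ (which would make them the same operator only if both are projections, which they are not), so instead I would argue via the defining conditions directly. For \eqref{eq:equivalence:1}, the key observation is that $\hat I_p$ restricted to $\QS_{p+1}(\hat\tau)$ is a specific map into $\QS_p(\hat\tau)$, and I claim that for any $\hat v$, the function $\hat I_p\hat\pi_{p+1}\hat v$ satisfies the three defining conditions \eqref{eq:general:def_inter1}--\eqref{eq:general:def_inter3} of $\hat\pi_p\hat v$. First I would note that at the four vertices $\hat a_i$, which are exactly the corner Gau\ss-Lobatto points $(t_0,t_0),(t_p,t_0),\dots$, point evaluation of $\hat I_p\hat\pi_{p+1}\hat v$ reproduces $\hat\pi_{p+1}\hat v$, and since $\hat\pi_{p+1}$ interpolates vertices exactly by \eqref{eq:general:def_inter1}, we get the correct vertex values. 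The crux is then the edge and cell moment conditions.

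For the edge moments, I would restrict attention to a single edge $\hat e_i$, where $\hat I_p$ reduces to the one-dimensional Lagrange interpolation at the $p+1$ Gau\ss-Lobatto nodes $t_0,\dots,t_p$. Because the Gau\ss-Lobatto quadrature rule is exact for polynomials of degree $2p-1$, and because $\hat I_p$ reproduces the nodal values of its argument, the integral $\int_{\hat e_i}(\hat I_p\hat\pi_{p+1}\hat v)\hat q$ against any $\hat q\in\PS_{p-2}(\hat e_i)$ can be evaluated by the quadrature rule exactly (the integrand has degree at most $p+(p-2)=2p-2\le 2p-1$), and the quadrature only sees nodal values, which agree with those of $\hat\pi_{p+1}\hat v$. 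Hence the edge moments of $\hat I_p\hat\pi_{p+1}\hat v$ equal those of $\hat\pi_{p+1}\hat v$, which by \eqref{eq:general:def_inter2} equal those of $\hat v$. The cell moment condition \eqref{eq:general:def_inter3} follows by the identical tensor-product argument in two variables, using exactness of the tensorised Gau\ss-Lobatto rule against $\hat q\in\QS_{p-2}(\hat\tau)$. Since $\hat I_p\hat\pi_{p+1}\hat v\in\QS_p(\hat\tau)$ and satisfies all defining conditions of $\hat\pi_p$, uniqueness of the vertex-edge-cell interpolant gives \eqref{eq:equivalence:1}.

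For \eqref{eq:equivalence:2}, the strategy is symmetric: I would show that $\hat\pi_p\hat I_{p+1}^*\hat v$ satisfies the nodal interpolation conditions \eqref{eq:GL:def_loc} defining $\hat I_p\hat v$. The point is that $\hat I_{p+1}^*\hat v$ is a $\QS_{p+1}$ function agreeing with $\hat v$ at all nodes $(t_i^*,t_j^*)$, in particular at the $(p+1)^2$ Gau\ss-Lobatto points $(t_i,t_j)$, $i,j=0,\dots,p$. One then checks that applying $\hat\pi_p$ preserves these nodal values, which again reduces via exactness of Gau\ss-Lobatto quadrature to an analysis of how $\hat\pi_p$ acts on the nodal data.

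The main obstacle will be the last step, because unlike the vertex values, the interior Gau\ss-Lobatto nodal values of $\hat\pi_p\hat I_{p+1}^*\hat v$ are not directly prescribed by the defining conditions of $\hat\pi_p$ — those conditions fix vertices, edge moments, and cell moments, not arbitrary interior point values. I therefore expect to need the converse direction: I would use \eqref{eq:equivalence:1}, already established, together with the fact that $\hat I_{p+1}^*$ restricted to its action and composed appropriately collapses the extra node $t_{p+1}^*$. Concretely, applying $\hat I_p$ to both sides of $\hat\pi_p\hat I_{p+1}^*\hat v$ and exploiting that $\hat I_p\hat\pi_p=\hat\pi_p$ on the relevant space (or that the nodal values at the $p+1$ Gau\ss-Lobatto nodes are reproduced under $\hat I_{p+1}^*$ independent of the choice of $t_{p+1}^*$) should force the identity; verifying that the freedom in $t_{p+1}^*$ genuinely drops out, and that the edge/cell moment conditions of $\hat\pi_p$ are compatible with exact reproduction of the Gau\ss-Lobatto nodal values, is the delicate point that requires careful bookkeeping of degrees and the quadrature exactness threshold $2p-1$.
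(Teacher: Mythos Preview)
Your argument for \eqref{eq:equivalence:1} is correct and coincides with the paper's: verify that $\hat I_p\hat\pi_{p+1}\hat v$ satisfies the vertex, edge-moment, and cell-moment conditions defining $\hat\pi_p\hat v$, exploiting that Gau\ss--Lobatto quadrature with $p+1$ nodes is exact on $\PS_{2p-1}$. A minor remark on bookkeeping: to conclude that the edge moments of $\hat I_p\hat\pi_{p+1}\hat v$ and of $\hat\pi_{p+1}\hat v$ against $\hat q\in\PS_{p-2}$ coincide, you need quadrature exactness not only for the product $(\hat I_p\hat\pi_{p+1}\hat v)\hat q$ (degree $\le 2p-2$) but also for $(\hat\pi_{p+1}\hat v)\hat q$ (degree $\le 2p-1$); both are within range, so the step is fine. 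The paper phrases this as showing that the \emph{difference} $(\hat\pi_{p+1}\hat v-\hat I_p\hat\pi_{p+1}\hat v)\hat q\in\PS_{2p-1}$ has zero quadrature sum, which is the same thing.

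For \eqref{eq:equivalence:2} your plan has a gap. The direct verification of nodal values is, as you note, not immediate, and your fallback (``apply $\hat I_p$ to both sides'', use $\hat I_p\hat\pi_p=\hat\pi_p$) does not lead anywhere: since $\hat\pi_p$ already maps into $\QS_p$, post-composing with $\hat I_p$ is the identity and yields no new information. What you are missing is to use \eqref{eq:equivalence:1} as a \emph{substitution for $\hat\pi_p$ on the left}, not as something to post-compose. The paper's proof is a one-liner:
\[
  \hat\pi_p\hat I_{p+1}^*
  \;=\;\hat I_p\hat\pi_{p+1}\hat I_{p+1}^*
  \;=\;\hat I_p\hat I_{p+1}^*
  \;=\;\hat I_p.
\]
The first equality is \eqref{eq:equivalence:1}; the second uses that $\hat I_{p+1}^*$ maps into $\QS_{p+1}(\hat\tau)$ and $\hat\pi_{p+1}$ is a projection onto that space, so $\hat\pi_{p+1}\hat I_{p+1}^*=\hat I_{p+1}^*$; the third is precisely your observation that the Gau\ss--Lobatto nodes of $\hat I_p$ form a subset of the nodes of $\hat I_{p+1}^*$, whence $\hat I_p\hat I_{p+1}^*=\hat I_p$. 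The ingredient you did not name explicitly is the projection identity $\hat\pi_{p+1}\hat I_{p+1}^*=\hat I_{p+1}^*$; once that is in place, no ``delicate bookkeeping'' is needed and the extra node $t_{p+1}^*$ drops out automatically.
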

  \begin{proof}
   We extend an idea given in \cite{HX08}. For any function $v\in C(\hat\tau)$ holds
   \[
    \hat\pi_pv\in \QS_p(\hat\tau)
    \quad\mbox{and}\quad
    \hat I_pv\in \QS_p(\hat\tau).
   \]
   To prove the equivalence \eqref{eq:equivalence:1} we only have to show that
   $\hat I_p\hat\pi_{p+1}$ shares the same degrees of freedom as $\hat\pi_p$.
   The definitions \eqref{eq:Lin:def} and \eqref{eq:GL:def} imply
   \begin{subequations}\label{eq:conection:proof}
   \begin{gather}
    (\hat\pi_p v)(\hat a_i)=(\hat I_p\hat\pi_{p+1}v)(\hat a_i)=v(\hat a_i),\quad i=1,\dots,4,
   \end{gather}
   where $\hat a_i$ are the vertices of $\hat\tau$.
   Furthermore, it holds for $\hat q\in \PS_{p-2}(\hat e_i)$
   \begin{align*}
    \int_{\hat e_i}(v-\hat I_p\hat\pi_{p+1}v)\hat q
     &= \int_{\hat e_i}(v-\hat I_p\hat\pi_{p+1}v-(v-\hat\pi_{p+1}v))\hat q
      = \int_{\hat e_i}(\hat\pi_{p+1}v-\hat I_p(\hat\pi_{p+1}v))\hat q,
   \end{align*}
   where $\hat e_i$ is any of the four edges of $\hat\tau$.
   Now, the final integrand is a polynomial of order $2p-1$.
   Thus the integral can be rewritten using a quadrature rule
   that is exact for polynomials of order $2p-1$. We use the Gau\ss-Lobatto rule and obtain
   \begin{gather}
    \int_{\hat e_i}(v-\hat I_p\hat\pi_{p+1}v)\hat q
      =\sum_{j=0}^p w_j((\hat\pi_{p+1}v-\hat I_p(\hat\pi_{p+1}v))\hat q)|_{\hat e_i}(t_j)=0,\quad i=1,\dots,4,
   \end{gather}
   where $\{w_j\}$ are the weights of the quadrature rule.
   The last equality comes from \eqref{eq:GL:def_loc}.
   In a similar fashion it follows for $\hat q\in \QS_{p-2}(\hat\tau)$
   \begin{gather}
    \iint_{\hat \tau}(v-\hat I_p\hat\pi_{p+1}v)\hat q
      =\sum_{i,j=0}^p w_{i,j}((\hat\pi_{p+1}v-\hat I_p(\hat\pi_{p+1}v))\hat q)(t_i,t_j)=0,
   \end{gather}
   due to the integrand being a polynomial in $\QS_{2p-1}(\hat \tau)$,
   the Gau\ss-Lobatto rule on the rectangle $\hat\tau$ and again \eqref{eq:GL:def_loc}.
   Comparing \eqref{eq:conection:proof} to \eqref{eq:Lin:def}
   one concludes \eqref{eq:equivalence:1}.

   The second equivalence \eqref{eq:equivalence:2} can be concluded easily by the first one:
   \[
    \hat \pi_p \hat I_{p+1}^*
     =\hat I_p \hat\pi_{p+1} \hat I_{p+1}^*
     =\hat I_p \hat I_{p+1}^*
     =\hat I_p,
   \]
   where we use the property of $\hat I_{p+1}^*$ and $\hat\pi_{p+1}$ being projections into $\QS_{p+1}(\hat \tau)$
   in the second step. The last equality holds because $\hat I_p$ uses a subset of
   interpolation nodes of $\hat I_{p+1}^*$ in its definition.
   \end{subequations}
  \end{proof}

\section{Supercloseness Analysis}\label{sec:analysis}
  Let us now come to the numerical method.
  We define the Galerkin bilinear form by
  \[
    a_{Gal}(v,w) := \eps(\grad v,\grad w) + (c v-b v_x, w)
  \]
  and a stabilisation bilinear-form of the streamline-diffusion method~\cite{HB79} by
  \[
   a_{stab}(v,w):=\sum_{\tau\in T^N}\delta_\tau(\eps\laplace v+b v_x-c v,b w_x)_\tau,
  \]
  where the parameters $\delta_\tau\geq 0$ are user chosen and influence both stability
  and convergence. They are taken constant in each sub-domain of $\Omega$,
  i.e. $\delta_\tau=\delta_{ij}$ for any $\tau\subset\Omega_{ij}$.
  The specific bounds of $\delta_{ij}$ will be defined below, see Theorem~\ref{thm:Lin:superclose}.

  The streamline-diffusion bilinear-form is then defined as
  \[
   a_{SD}(v,w):=a_{Gal}(v,w)+a_{stab}(v,w)
  \]
  and the streamline-diffusion formulation of~\eqref{eq:Lu} is given by\bigskip

  Find ${u}^N\in V^N$ such that
  \begin{equation}\label{eq:SD_form}
    a_{SD}({u}^N, v^N) = f_{SD}(v^N),\qquad \forall v^N\in V^N
  \end{equation}
  where
  \[
   f_{SD}(v):=(f,v)-\sum_{\tau\in T^N}\delta_\tau(f,bv_x)_\tau.
  \]
%
  \begin{thm}\label{thm:Lin:superclose}
%
   For $\QS_p$-elements, $\sigma\geq p+1$ and under the restrictions on the stabilisation parameters
   \[
    \delta_{11}=C N^{-1},\quad
    \delta_{21}\leq C\max\{1,\eps^{-1/2}(N^{-1}\max |\psi'|)\}(N^{-1}\max |\psi'|)^{2},\quad
    \delta_{12}=\delta_{22}=0,
   \]
   it holds the estimate
   \[
    \enorm{\pi_p^N u-u^N}\leq C (h+k+N^{-1}\max |\psi'|)^{p+1/2}(\max|\psi'|\ln N)^{1/2}.
   \]
  \end{thm}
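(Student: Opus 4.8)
The plan is to establish the supercloseness bound $\enorm{\pi_p^N u - u^N}$ by testing the error equation against $v^N := \pi_p^N u - u^N \in V^N$ and exploiting coercivity of the streamline-diffusion form. First I would recall that $a_{SD}$ is coercive in a strengthened streamline-diffusion norm, so that $\sdnorm{v^N}^2 \le a_{SD}(v^N, v^N)$ up to constants; then using Galerkin orthogonality $a_{SD}(u - u^N, v^N) = 0$ I reduce the problem to bounding the consistency error $a_{SD}(u - \pi_p^N u, v^N)$. The overall strategy is to decompose this bilinear form into its constituent pieces — the diffusion term $\eps(\grad(u-\pi_p^N u), \grad v^N)$, the convection term $(b(u-\pi_p^N u)_x, v^N)$ or its integration-by-parts form, the reaction term, and the stabilisation contributions $a_{stab}$ — and to estimate each against $\sdnorm{v^N}$ so that the $v^N$-factor can be absorbed into the left-hand side.

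The heart of the argument is to exploit the solution decomposition from Assumption~\ref{ass:dec}, writing $u = v + w_1 + w_2 + w_{12}$ and treating each component on each subdomain $\Omega_{11}, \Omega_{12}, \Omega_{21}, \Omega_{22}$ separately, since the mesh widths $h_i, k_j$ and the layer factors $\e^{-\beta x/\eps}$, $\e^{-y/\sqrt\eps}$ behave very differently in the coarse region versus the fine layer region. For the regular part $v$ the mesh is effectively quasi-uniform with spacing $\ord{N^{-1}}$, and standard anisotropic interpolation-error estimates for $\pi_p^N$ give the required $(p+1)$-th order local bounds. For the layer parts, the key is that the interpolation operator $\pi_p^N$ is built from the vertex-edge-cell degrees of freedom, whose defining moment conditions \eqref{eq:general:def_inter2} and \eqref{eq:general:def_inter3} create orthogonality that can be played against $v^N$ to gain an extra half power of $N^{-1}\max|\psi'|$ beyond the plain interpolation order $p$ of Theorem~\ref{thm:interpolation}; this is precisely the superconvergence mechanism. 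I would use the pointwise bounds \eqref{eq:dec:C0} on the coarse subdomains where the exponentials are tiny, and switch to the $L_2$-bounds \eqref{eq:dec:L2} together with the mesh-size estimate \eqref{eq:hi} inside the layers.

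The convection term deserves special care: rather than estimating $(b(u-\pi_p^N u)_x, v^N)$ directly, which would lose a power, I would integrate by parts to move the derivative onto $v^N$ and use the moment/orthogonality properties of the interpolant, following the classical supercloseness technique for convection-diffusion on Shishkin-type meshes. The stabilisation term $a_{stab}$ must then be shown not to destroy the gained order: here the precise choice $\delta_{11} = CN^{-1}$, $\delta_{21} \le C\max\{1, \eps^{-1/2}(N^{-1}\max|\psi'|)\}(N^{-1}\max|\psi'|)^2$, and $\delta_{12} = \delta_{22} = 0$ is exactly calibrated so that the stabilisation contributions from the regular and layer parts are each bounded by $\ord{(N^{-1}\max|\psi'|)^{p+1/2}\ln N}\,\sdnorm{v^N}$; switching off stabilisation in the $x$-layer subdomains $\Omega_{12}, \Omega_{22}$ avoids the otherwise uncontrollable second-derivative factor $\eps\laplace(u-\pi_p^N u)$ there.

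I expect the main obstacle to be the careful accounting in the convection and stabilisation terms within the characteristic-layer and corner-layer subdomains, where the anisotropy of the mesh (fine in one direction, coarse in the other) interacts with the anisotropic derivative bounds on $w_2$ and $w_{12}$. Getting every contribution to come in at order exactly $p+1/2$ (with the single logarithmic factor) — rather than $p$ or $p+1$ — requires balancing the half-power gain from the interpolation orthogonality against the losses from the $\eps$-weighted norms and the stabilisation parameters, and it is in these mixed terms that the sharpness of the stated bound is decided.
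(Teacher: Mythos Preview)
Your plan is the standard supercloseness argument for SDFEM on layer-adapted meshes and is correct in outline; it is essentially the content of the cited reference~\cite{Fr11}, adapted from the piecewise-uniform Shishkin mesh to general S-type meshes via the mesh-characterising function~$\psi$. The paper, however, does not reprove this result: its proof of Theorem~\ref{thm:Lin:superclose} simply cites~\cite{Fr11} for the Shishkin-mesh case and invokes the standard S-type-mesh techniques of~\cite{RoosLins99,FrM10} to extend it, noting only that the extra $\ln N$ factor arises from the convection term inside the characteristic layers. So your proposal is not wrong, but it is far more detailed than what the paper actually does at this point --- you are sketching the argument that the paper deliberately outsources.

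One small point worth flagging: you write that the interpolation orthogonality gives ``an extra half power'' beyond the order~$p$ of Theorem~\ref{thm:interpolation}. In the actual analysis (cf.~\cite{Fr11}) the Lin-identity/orthogonality arguments for $\pi_p^N$ gain a \emph{full} power on most terms, producing $O((N^{-1}\max|\psi'|)^{p+1})$; the half-power loss down to $p+1/2$ comes specifically from the convective term in the coarse region~$\Omega_{11}$, where one trades a factor $\delta_{11}^{-1/2}$ against the stabilisation part of $\sdnorm{v^N}$, and the choice $\delta_{11}=CN^{-1}$ makes this cost exactly~$N^{1/2}$. Your narrative slightly misattributes where the $1/2$ is lost, though the overall bookkeeping still closes.
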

  \begin{proof}
    In \cite{Fr11} this result is given for the standard Shishkin-mesh. 
    Together with techniques for S-type meshes, see e.g.~\cite{RoosLins99,FrM10},
    its proof can be adapted directly and the desired bound follows.
    Note that the additional logarithmic factor is caused by the estimation
    of the convective term inside the characteristic layers.
  \end{proof}

  \begin{rem}
   If we replace above bound on $\delta_{21}$ by
   \[
    \delta_{21}= C(N^{-1}\max |\psi'|)^{2},
   \]
   the same argumentation that yields Theorem~\ref{thm:Lin:superclose}
   gives the slightly sharper result
   \[
     \enorm{\pi_p^N u-u^N}\leq C (h+k+N^{-1}\max|\psi'|)^{p+1/2}(\ln N)^{1/2}.
   \]
   Of course, if $\max|\psi'|\leq C$ (like in the case of a Bakhvalov--Shishkin mesh)
   this is the same bound.
  \end{rem}

  \begin{cor}\label{cor:convergence}
    Combining Theorems~\ref{thm:interpolation} and \ref{thm:Lin:superclose}
    yields the convergence result
    \[
     \enorm{u-u^N}\leq C(h+k+N^{-1}\max|\psi'|)^p.
    \]
  \end{cor}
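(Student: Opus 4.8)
The plan is to derive this corollary directly from the two preceding results by a triangle inequality, so the main work is bookkeeping rather than fresh analysis. First I would write
\[
 \enorm{u-u^N} \leq \enorm{u-\pi_p^N u} + \enorm{\pi_p^N u - u^N},
\]
splitting the total error into an interpolation error and a supercloseness (interpolantwise) error. The first term is bounded by Theorem~\ref{thm:interpolation}, which gives $\enorm{u-\pi_p^N u}\leq C(\hbar+\kbar+N^{-1}\max|\psi'|)^p$. The second term is controlled by Theorem~\ref{thm:Lin:superclose}, provided the stabilisation parameters $\delta_{ij}$ satisfy the stated restrictions; this yields $\enorm{\pi_p^N u-u^N}\leq C(\hbar+\kbar+N^{-1}\max|\psi'|)^{p+1/2}\ln N$.

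The key observation is then simply that the supercloseness term is of higher order than the interpolation term. Writing $H:=\hbar+\kbar+N^{-1}\max|\psi'|$ for brevity, the two contributions are $CH^p$ and $CH^{p+1/2}\ln N$, respectively. Since $H\to 0$ as $N\to\infty$ (indeed $H\leq C N^{-1}\max|\psi'|$, which tends to zero for both mesh types under consideration), the factor $H^{1/2}\ln N$ is bounded — in fact tends to zero — so that $H^{p+1/2}\ln N \leq C H^p$. Hence the supercloseness term is absorbed into the interpolation term, and the sum is bounded by $CH^p$, which is exactly the claimed estimate.

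I would expect the only genuine subtlety to lie in justifying that $H^{1/2}\ln N$ stays bounded, i.e. that the higher fractional power together with the logarithm does not spoil the order reduction. This follows because $N^{-1}\max|\psi'|$ is at most $C N^{-1}\ln N$ on the Shishkin mesh and $C N^{-1}$ on the Bakhvalov--Shishkin mesh, so $H^{1/2}\ln N\to 0$ in either case. This is the step where the hypotheses on $\phi$ and the assumption $\sigma\geq p+1$ (inherited from both theorems) quietly do their work; everything else is a one-line triangle inequality. No new estimates on $u$, the mesh, or the bilinear forms are needed, since both ingredient results are already available.
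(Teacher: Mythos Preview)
Your proposal is correct and matches the paper's intended argument: the paper gives no separate proof for this corollary, and the phrase ``Combining Theorems~\ref{thm:interpolation} and~\ref{thm:Lin:superclose}'' in the statement itself is precisely the triangle-inequality splitting you carry out. The only minor remark is that your claim $H\leq C N^{-1}\max|\psi'|$ is not literally true for general S-type meshes (there $\hbar\leq C\eps$ and $\kbar\leq C\eps^{1/2}$ need not be bounded by $N^{-1}\max|\psi'|$), but assumption~\eqref{eq:ass:eps} gives $\eps^{1/2}\leq C(\ln N)^{-1}$, so $H^{1/2}\ln N\leq C$ still holds and the absorption of the supercloseness term into the interpolation term goes through as you describe.
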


  To analyse the supercloseness behaviour of the Gau\ss-Lobatto
  interpolation operator, consider
  \begin{gather}\label{eq:GL:connection}
   I^N_pu=\pi^N_pu+R^Nu+(u-\pi^N_{p+1}u),
  \end{gather}
  that is a consequence of Lemma~\ref{lem:connection}, where
  \[
    R^Nu:=I_p^N(\pi_{p+1}^Nu-u)-(\pi_{p+1}^Nu-u).
  \]
  Now \eqref{eq:GL:connection} implies
  \begin{gather}\label{eq:GL:start}
   \enorm{I_p^Nu-u^N}
    \leq \enorm{\pi_p^Nu-u^N}
        +\enorm{R^Nu)}
        +\enorm{\pi_{p+1}^Nu-u}.
  \end{gather}
  Its first term can be estimated by the supercloseness result of Theorem~\ref{thm:Lin:superclose}
  and its last term by the interpolation error result of Theorem~\ref{thm:interpolation}
  adapted to the case of elements of order $p+1$.
  Thus, we only have to estimate the energy norm of $R^Nu$.
  We start with some basic estimates for $R^Nu$.
%
  \begin{lem}\label{lem:Ru}
   For any $w\in C(\tau_{ij})$ holds the stability estimate
   \begin{subequations}
   \begin{gather}\label{eq:stab:1}
     \norm{R^Nw}{L_\infty(\tau_{ij})}\leq C\norm{w}{L_\infty(\tau_{ij})}.
   \end{gather}
%
   For any $w\in H^{p+2}(\Omega)$, $1\leq t\leq p$ and $\tau_{ij}\in T^N$ we have
   the anisotropic error estimates
   \begin{align}
    \norm{R^Nw}{0,\tau_{ij}}
     &\leq C\sum_{r=0}^{t+2}\norm{h_i^{t+2-r}k_j^r
                                  \pt_x^{t+2-r}\pt_y^r w}{0,\tau_{ij}},\label{eq:Ru:0}\\
    \norm{(R^Nw)_x}{0,\tau_{ij}}
     &\leq C\sum_{r=0}^{t+1}\norm{h_i^{t+1-r}k_j^r
                                  \pt_x^{t+2-r}\pt_y^r w}{0,\tau_{ij}},\label{eq:Ru:x}
   \end{align}
   and analogously for $\norm{(R^Nw)_y}{0,\tau_{ij}}$.
   \end{subequations}
  \end{lem}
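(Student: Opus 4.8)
The plan is to transfer everything to the reference element $\hat\tau=[-1,1]^2$ and there combine a stability argument with a Bramble--Hilbert estimate. Writing $\eta:=(I-\pi_{p+1}^N)w$ one checks directly that $R^Nw=I_p^N(\pi_{p+1}^Nw-w)-(\pi_{p+1}^Nw-w)=(I-I_p^N)\eta$, so that
\[
 R^Nw=(I-I_p^N)(I-\pi_{p+1}^N)w,
\]
and locally, with $\hat w:=w\circ F_{\tau_{ij}}$, the corresponding reference operator is $\hat R:=(I-\hat I_p)(I-\hat\pi_{p+1})$; by Lemma~\ref{lem:connection} one also has the equivalent form $\hat R\hat w=\hat\pi_p\hat w-\hat I_p\hat w-\hat\pi_{p+1}\hat w+\hat w$. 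Since $F_{\tau_{ij}}$ scales the two coordinates separately by $h_i/2$ and $k_j/2$, the chain rule turns a reference derivative $\partial_{\hat x}^{t+2-r}\partial_{\hat y}^r$ into $(2/h_i)^{t+2-r}(2/k_j)^r\partial_x^{t+2-r}\partial_y^r$, and the Jacobian contributes a factor $(h_ik_j/4)^{1/2}$ to every $L_2$-norm. Tracking these factors shows that each of the three claimed inequalities is equivalent to a purely $\eps$- and $N$-independent estimate on $\hat\tau$, the anisotropic weights $h_i^{t+2-r}k_j^r$ being produced automatically by the back-transformation.

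For the stability bound \eqref{eq:stab:1} I would use that the $L_\infty$-norm is invariant under $F_{\tau_{ij}}$, so it suffices to prove $\norm{\hat R\hat w}{L_\infty(\hat\tau)}\le C\norm{\hat w}{L_\infty(\hat\tau)}$. Each of the three operators $\hat\pi_p$, $\hat I_p$, $\hat\pi_{p+1}$ is a projection onto a fixed finite-dimensional polynomial space whose defining functionals (point values, and edge/cell moments against fixed polynomials) are all bounded by the sup-norm of their argument; hence each operator is $L_\infty$-bounded on $C(\hat\tau)$, for $\hat I_p$ this being just the fixed Lebesgue constant. The triangle inequality applied to $\hat R\hat w=\hat\pi_p\hat w-\hat I_p\hat w-\hat\pi_{p+1}\hat w+\hat w$ then gives \eqref{eq:stab:1}.

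For the $L_2$-estimate \eqref{eq:Ru:0} the reduced reference inequality reads $\norm{\hat R\hat w}{0,\hat\tau}\le C\sum_{r=0}^{t+2}\norm{\partial_{\hat x}^{t+2-r}\partial_{\hat y}^r\hat w}{0,\hat\tau}$, whose right-hand side is exactly the $H^{t+2}(\hat\tau)$-seminorm. Here I would argue by the classical Bramble--Hilbert lemma: since $t+2\ge 3>1$, Sobolev embedding makes the point evaluations bounded, so $\hat R:H^{t+2}(\hat\tau)\to L_2(\hat\tau)$ is bounded; and because $t\le p$ we have $\PS_{t+1}\subset\QS_{p+1}$, which $\hat\pi_{p+1}$ reproduces, so $(I-\hat\pi_{p+1})$ and hence $\hat R$ annihilates $\PS_{t+1}$. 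As the kernel $\PS_{t+1}$ is finite-dimensional, Bramble--Hilbert applies and yields the reference inequality; back-transformation then gives \eqref{eq:Ru:0}.

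The derivative estimate \eqref{eq:Ru:x} is the genuinely delicate one and I expect it to be the main obstacle. After scaling, the claim is $\norm{\partial_{\hat x}\hat R\hat w}{0,\hat\tau}\le C\sum_{r=0}^{t+1}\norm{\partial_{\hat x}^{t+2-r}\partial_{\hat y}^r\hat w}{0,\hat\tau}$, where the sum now stops at $r=t+1$: the pure derivative $\partial_{\hat y}^{t+2}\hat w$ must not appear, since under back-transformation it would carry the factor $h_i^{-1}k_j^{t+2}$ and thus a forbidden negative power of $h_i$. To exclude this term one needs more than annihilation of $\PS_{t+1}$: I would show that $\partial_{\hat x}\hat R$ additionally annihilates every function depending on $\hat y$ alone. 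This rests on the tensor-product structure of both interpolants; for $\hat w=g(\hat y)$ the operator $\hat\pi_{p+1}$ reproduces the one-dimensional endpoint-plus-moment interpolant in $\hat y$ (verified by checking the vertex, edge and cell functionals against the univariate conditions), and the Lagrange operator $\hat I_p$ likewise maps functions of $\hat y$ to functions of $\hat y$; hence $\hat R g$ is independent of $\hat x$ and $\partial_{\hat x}\hat R g=0$. The kernel relevant for the right-hand seminorm is therefore $\PS_{t+1}$ enlarged by all functions of $\hat y$, which is infinite-dimensional, so the classical Deny--Lions/Bramble--Hilbert argument no longer applies and one must invoke a directional (anisotropic) Bramble--Hilbert lemma in the $\hat x$-variable, treating $\hat y$ as a parameter. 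Carrying this out, together with the analogous computation with the roles of $\hat x$ and $\hat y$ interchanged for $\norm{(R^Nw)_y}{0,\tau_{ij}}$, would complete the proof.
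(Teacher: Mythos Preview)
Your treatment of the stability bound \eqref{eq:stab:1} is essentially the paper's: both arguments rest on the $L_\infty$-boundedness of $\hat I_p$ and $\hat\pi_{p+1}$, which follows from the nature of their defining functionals.

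For the anisotropic estimates \eqref{eq:Ru:0} and \eqref{eq:Ru:x} you take a genuinely different route. You transform to the reference element and try to establish the full estimate in one stroke via a (classical, then anisotropic) Bramble--Hilbert argument applied to the composite operator $\hat R=(I-\hat I_p)(I-\hat\pi_{p+1})$. The paper instead exploits the product structure $R^Nw=(I-I_p^N)\bigl((I-\pi_{p+1}^N)w\bigr)$ and applies \emph{two} already-available anisotropic interpolation estimates in cascade: first an anisotropic bound for the $I_p^N$-error, yielding
\[
 \norm{R^Nw}{0,\tau_{ij}}\le C\bigl(h_i\norm{(\pi_{p+1}^Nw-w)_x}{0,\tau_{ij}}+k_j\norm{(\pi_{p+1}^Nw-w)_y}{0,\tau_{ij}}\bigr),
\]
respectively
\[
 \norm{(R^Nw)_x}{0,\tau_{ij}}\le C\bigl(h_i\norm{(\pi_{p+1}^Nw-w)_{xx}}{0,\tau_{ij}}+k_j\norm{(\pi_{p+1}^Nw-w)_{xy}}{0,\tau_{ij}}\bigr),
\]
and then an anisotropic bound for the first (resp.\ second) derivatives of $\pi_{p+1}^Nw-w$ in terms of $(t+2)$-th derivatives of $w$. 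Both steps are direct citations of \cite{Apel99,FrM10,Matt08}.

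What the two approaches buy: your direct argument is self-contained for \eqref{eq:Ru:0} and makes the polynomial reproduction transparent, but for \eqref{eq:Ru:x} it forces you into the ``directional Bramble--Hilbert'' machinery you yourself flag as the main obstacle and do not actually carry out. The paper's cascade completely sidesteps this: by first stripping off $I-I_p^N$ with a low-order anisotropic estimate, the awkward exclusion of the pure $\partial_y^{t+2}w$ term is obtained for free, since both $(\pi_{p+1}^Nw-w)_{xx}$ and $(\pi_{p+1}^Nw-w)_{xy}$ already involve at least one $x$-derivative before the second anisotropic estimate is applied. In short, the paper's proof is shorter and avoids precisely the step you leave open; your approach would work but amounts to re-deriving pieces of the anisotropic theory that the paper simply quotes.
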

  \begin{proof}
   The stability estimate \eqref{eq:stab:1} is a direct consequence of the stability of the interpolation
   operators $I^N_p$ and $\pi_{p+1}^N$ in $L_\infty$. Their stability holds because all degrees of freedom
   are point-evaluations or integrals.

   For \eqref{eq:Ru:0} we use anisotropic error estimates \cite{Apel99,FrM10,Matt08} to obtain
   \begin{align*}
    \norm{R^Nw}{0,\tau_{ij}}
     &\leq C\bigg[
              \norm{h_i(\pi_{p+1}^Nw-w)_x}{0,\tau_{ij}}+
              \norm{k_j(\pi_{p+1}^Nw-w)_y}{0,\tau_{ij}}
            \bigg]\\
     &\leq C\bigg[
              \sum_{r=0}^{t+1}\norm{h_i^{t+2-r}k_j^r
                                    \pt_x^{t+2-r}\pt_y^r w}{0,\tau_{ij}}
             +\sum_{r=0}^{t+1}\norm{h_i^{t+1-r}k_j^{1+r}
                                    \pt_x^{t+1-r}\pt_y^{r+1} w}{0,\tau_{ij}}
            \bigg],
   \end{align*}
   which gives \eqref{eq:Ru:0}.
   For \eqref{eq:Ru:x} we need additionally anisotropic estimates for the second order
   derivatives, see again \cite{Apel99}
   \begin{align*}
    \norm{(R^Nw)_x}{0,\tau_{ij}}
     &\leq C\bigg[
              \norm{h_i(\pi_{p+1}^Nw-w)_{xx}}{0,\tau_{ij}}+
              \norm{k_j(\pi_{p+1}^Nw-w)_{xy}}{0,\tau_{ij}}
            \bigg]\\
     &\leq C\bigg[
              \sum_{r=0}^{t}\norm{h_i^{1+t-r}k_j^r
                                  \pt_x^{t+2-r}\pt_y^r w}{0,\tau_{ij}}+
              \sum_{r=0}^{t}\norm{h_i^{t-r}k_j^{1+r}
                                  \pt_x^{t+1-r}\pt_y^{r+1} w}{0,\tau_{ij}}
            \bigg]
   \end{align*}
   which gives \eqref{eq:Ru:x}.
  \end{proof}

  \begin{thm}\label{thm:Ru}
   Let us assume $\sigma\geq p+1$ and $N^{-1}(\max|\psi'|)^2\leq C$.
   For $R^N$ defined above holds in the case of $k\leq C N^{-1/4}$
   \begin{subequations}\label{eq:Ru:energy}
   \begin{gather}
    \enorm{R^Nu}\leq C(N^{-(\sigma-1/2)}(1+h_{mesh})+(h+k+N^{-1}\max|\psi'|)^{p+1})
   \end{gather}
   and otherwise
   \begin{gather}
    \enorm{R^Nu}\leq C(N^{-(\sigma-2/3)}(1+h_{mesh})+(h+k+N^{-1}\max|\psi'|)^{p+1}),
   \end{gather}
   \end{subequations}
   where $h_{mesh}:=\eps N^{-1}(\ln N)^{1/2}/h_{min}$.
  \end{thm}
  \begin{rem}
   For the mesh specific value $h_{mesh}$ holds on a Shishkin mesh
   \[
    C_1(\ln N)^{-1/2}\leq h_{mesh}\leq C_2(\ln N)^{-1/2},
   \]
   on a Bakhvalov--Shishkin mesh
   \[
    C_1(\ln N)^{1/2}\leq h_{mesh}\leq C_2(\ln N)^{1/2},
   \]
   and on a general S-type mesh
   \[
    C_1(\ln N)^{-1/2}\leq h_{mesh}\leq C_2(\ln N)^{1/2}.
   \]
  \end{rem}

  \begin{proof}[Proof of Theorem ~\ref{thm:Ru}]
    Let us start with the $L_2$-norm estimate. 
    We use the solution decomposition of Assumption~\ref{ass:dec} and start with the regular part $v$.
    By \eqref{eq:Ru:0} with $t=p-1\geq1$ we obtain
    \begin{subequations}\label{eq:Ru:L2:proof}
      \begin{gather}
        \norm{R^Nv}{0,\Omega}\leq C(h+k+N^{-1})^{p+1}.
      \end{gather}
      Estimate \eqref{eq:Ru:0} can also be used to bound $w_1$ in $\Omega_{12}\cup\Omega_{22}$, where
      $h_i$ can be estimated by \eqref{eq:hi}:
      \begin{align*}
      \norm{R^Nw_1}{0,\Omega_{12}\cup\Omega_{22}}^2
       &\leq C \sum_{\tau_{ij}\subset\Omega_{12}\cup\Omega_{22}}
               \sum_{r=0}^{p+1}\norm{h_i^{p+1-r}k_j^r
                               \pt_x^{p+1-r}\pt_y^r w_1}{0,\tau_{ij}}^2\notag\\
       &\leq C \sum_{r=0}^{p+1}\norm{(\eps N^{-1}\max|\psi'|e^{\beta x/(\sigma\eps)})^{p+1-r}(k+N^{-1})^r
                               \eps^{-(p+1-r)} e^{-\beta x/\eps}}{0,\Omega_{12}\cup\Omega_{22}}^2\notag\\
       &\leq C (k+N^{-1}\max|\psi'|)^{2(p+1)}
               \sum_{r=0}^{p+1}\norm{e^{\frac{\beta x}{\eps}(\frac{p+1-r}{\sigma}-1)}}{0,\Omega_{12}\cup\Omega_{22}}^2.
      \end{align*}
      Now $(p+1)/\sigma-1\leq 0$ gives a non-positive exponent of the exponential function.
      Together with the rather crude bound $\meas(\Omega_{12}\cup\Omega_{22})\leq C$ follows
      \begin{align}
      \norm{R^Nw_1}{0,\Omega_{12}\cup\Omega_{22}}^2
       &\leq C (k+N^{-1}\max|\psi'|)^{2(p+1)}
               \sum_{r=0}^{p+1}\norm{e^{\frac{\beta x}{\eps}(\frac{p+1-r}{\sigma}-1)}}{0,\Omega_{12}\cup\Omega_{22}}^2\notag\\
       &\leq C (k+N^{-1}\max |\psi'|)^{2(p+1)}.
      \end{align}
      In $\Omega_{11}\cup\Omega_{21}$ we use the stability \eqref{eq:stab:1} to obtain
      \begin{align}
        \norm{R^Nw_1}{0,\Omega_{11}\cup\Omega_{21}}
         &\leq C\norm{w_1}{L_\infty(\Omega_{11}\cup\Omega_{21})}
          \leq C N^{-\sigma}.
      \end{align}
      The other two layer terms can be estimated similarly and combining these results
      proves the $L_2$-estimates.
     \end{subequations}
     For the $H^1$-component we use \eqref{eq:Ru:x} with $t=p$ and its counterpart for the $y$-derivative for the regular
     solution component $v$ to obtain
     \begin{subequations}\label{eq:Ru:x:proof}
       \begin{gather}
          \norm{\grad R^Nv}{0,\Omega}
           \leq C (h+k+N^{-1})^{p+1}.
       \end{gather}
       Similarly, using the type of analysis as above, we show
       \begin{align}
        \norm{\grad R^Nw_1}{0,\Omega_{12}\cup\Omega_{22}}
         &\leq C\eps^{-1/2}(k+N^{-1}\max |\psi'|)^{p+1},\\
        \norm{\grad R^Nw_2}{0,\Omega_{21}\cup\Omega_{22}}
         &\leq C\eps^{-1/4}(h+N^{-1}\max |\psi'|)^{p+1},\\
        \norm{\grad R^Nw_{12}}{0,\Omega_{22}}
         &\leq C\eps^{-1/4}(N^{-1}\max |\psi'|)^{p+1}.
       \end{align}
       On the other domains we use the decay of the layer terms.
       We show the analysis exemplary for the three terms yielding the
       largest bounds and invoking the most assumptions.
       Let us start with $(R^Nw_{12})_x$ in $\Omega_{12}$.
       By \eqref{eq:Ru:x} with $t=1$ we obtain
       \begin{align*}
         \norm{(R^Nw_{12})_x}{0,\Omega_{12}}
          &\leq C \sum_{r=0}^2(\eps N^{-1}\max|\psi'|)^{2-r}N^{-r}\eps^{-(3-r)}\eps^{-r/2}
                             \norm{e^{\frac{\beta x}{\eps}(\frac{2-r}{\sigma}-1)}(e^{-\frac{y}{\eps^{1/2}}}+e^{-\frac{1-y}{\eps^{1/2}}})}
                                  {0,\Omega_{12}}\notag\\
          &\leq C \eps^{-1/2}N^{-\sigma}[(N^{-1}\max|\psi'|)^2\eps^{-3/4}].
       \end{align*}
       On the other hand, a triangle and an inverse inequality give
       \[
        \norm{(R^Nw_{12})_x}{0,\Omega_{12}}
         \leq C \left[h_{min}^{-1}(\norm{I_p^N(\pi_{p+1}^Nw_{12}-w_{12})}{0,\Omega_{12}}+
                             \norm{\pi_{p+1}^Nw_{12}}{0,\Omega_{12}})
                +\norm{(w_{12})_x}{0,\Omega_{12}}\right].
       \]
       While the last term of the right-hand side can be estimated directly,
       we use an idea from~\cite{ST08} incorporating the stability of $I^N_p$
       and $\pi_{p+1}^N$ for the other two terms.
       \begin{align*}
        &\norm{I_p^N(\pi_{p+1}^Nw_{12}-w_{12})}{0,\Omega_{12}}^2+
        \norm{\pi_{p+1}^Nw_{12}}{0,\Omega_{12}}^2\\
         &\leq C\iint_{\Omega_{12}}(I_p^N\pi_{p+1}^Nw_{12})^2+(I_p^Nw_{12})^2+(\pi_{p+1}^Nw_{12})^2\\
         &\leq C\sum_{i=N/2+1}^N\int_{x_{i-1}}^{x_i}
                 \sum_{j=N/4+1}^{3N/4}\int_{y_{j-1}}^{y_j}(w_{12}^2(x_{i-1},y_{j-1})+w_{12}^2(x_{i-1},y_{j}))dydx\\
         &\leq C\left[
                  \sum_{i=N/2+1}^N\int_{x_{i-1}}^{x_i}e^{-\frac{2\beta x_{i-1}}{\eps}}dx
                \right]
                \left[
                  \sum_{j=N/4+1}^{3N/4}\int_{y_{j-1}}^{y_j}\bigg(e^{-\frac{2y_{j-1}}{\eps^{1/2}}}+e^{-\frac{2(1-y_{j})}{\eps^{1/2}}}\bigg)dy
                \right]\\
         &\leq C\left[
                   \int_{0}^{x_1}e^{-\frac{2\beta x_{0}}{\eps}}dx
                  +\int_{0}^{\lambda_x}e^{-\frac{2\beta x}{\eps}}dx
                \right]
                \left[
                   \int_{y_{N/4}}^{y_{N/4+1}}e^{-\frac{2\lambda_y}{\eps^{1/2}}}dy
                  +\int_{\lambda_y}^{1/2}e^{-\frac{2y}{\eps^{1/2}}}dy
                \right]\\
         &\leq C\left[
                   h+\eps
                \right]
                \left[
                   N^{-1}+\eps^{1/2}
                \right]N^{-2\sigma}
          \leq C\eps(\eps^{1/2}+N^{-1})N^{-2\sigma}.
       \end{align*}
       Here we have used the symmetry of the pointwise bound of $w_{12}$ w.r.t. $y$.
       Thus, a second bound for $(R^Nw_{12})_x$ in $\Omega_{12}$ holds:
       \begin{align*}
        \norm{(R^Nw_{12})_x}{0,\Omega_{12}}
         &\leq C \left[
                  h_{mesh}\eps^{-1}N(\ln N)^{-1/2}N^{-\sigma}(\eps^{1/4}+N^{-1/2})\eps^{1/2}+\eps^{-1/4}N^{-\sigma}
                 \right]\\
         &\leq C \eps^{-1/2}N^{-\sigma}[\eps^{1/4}+h_{mesh}(\eps^{1/4}N+N^{1/2})].
       \end{align*}
       Combining these two estimates we obtain 
       \begin{align}
        \norm{(R^Nw_{12})_x}{0,\Omega_{12}}
         &\leq C \eps^{-1/2}N^{-\sigma}\min\{
                                             (N^{-1}\max|\psi'|)^2\eps^{-3/4},h_{mesh}(\eps^{1/4}N+N^{1/2})
                                           \}\notag\\
         &\leq C\eps^{-1/2}N^{-(\sigma-1/2)}(1+h_{mesh}),
       \end{align}
       where we used $N^{-1}(\max|\psi'|)^2\leq C$ in estimating the minimum.
       The second term we want to look at is $(R^Nw_2)_x$ in $\Omega_{12}$.
       This one highlights in cancelling the logarithmic term, why $h_{mesh}$ is defined as it is.
       We obtain the two estimates
       \begin{align*}
        \norm{(R^Nw_2)_x}{0,\Omega_{12}}
         &\leq C \eps^{-1/2}N^{-\sigma}[(h+N^{-1})^2\eps^{-1/4}],\\
        \norm{(R^Nw_2)_x}{0,\Omega_{12}}
         &\leq C[h_{mesh}\eps^{-1}N(\ln N)^{-1/2}N^{-\sigma}(\eps^{1/4}+N^{-1/2})+\eps^{1/4}N^{-\sigma}]\eps^{1/2}(\ln N)^{1/2}\\
         &\leq C \eps^{-1/2}N^{-\sigma}h_{mesh}[\eps^{1/4}N+N^{1/2}]
       \end{align*}
       and therefore
       \begin{align}
        \norm{(R^Nw_2)_x}{0,\Omega_{12}}
         &\leq C\eps^{-1/2} N^{-\sigma}\min\{(h+N^{-1})^2\eps^{-1/4},h_{mesh}[\eps^{1/4}N+N^{1/2}]\}\notag\\
         &\leq C\eps^{-1/2} N^{-(\sigma-1/2)}(1+h_{mesh}).
       \end{align}
       As a third term we look at $(R^Nw_1)_x$ in $\Omega_{21}$. A similar analysis as above
       gives for $k\leq C N^{-1/4}$
       \begin{align}
        \norm{(R^Nw_1)_x}{0,\Omega_{21}}
         &\leq C\eps^{-1/2}N^{-\sigma}\min\{\eps^{-2}k^2,1+(\eps N)^{1/2}+\eps N\}\notag\\
         &\leq C\eps^{-1/2}N^{-\sigma}(1+(Nk)^{2/3})
          \leq C\eps^{-1/2}N^{-(\sigma-1/2)}
       \end{align}
       and in general ($k\leq 1$)
       \begin{align}
        \norm{(R^Nw_1)_x}{0,\Omega_{21}}
         &\leq C\eps^{-1/2}N^{-\sigma}(1+(Nk)^{2/3})
          \leq C\eps^{-1/2}N^{-(\sigma-2/3)}.\label{eq:Ru:x:proof:last}
       \end{align}
       All other terms remaining can be estimated by similar steps. Combining the results gives
       finally the statement of the theorem.
     \end{subequations}
   \end{proof}

%

  \begin{thm}\label{thm:GL:superclose}
   Let $\sigma\geq p+2$. Then it holds for the streamline-diffusion solution $u^N$
   under the restrictions on the stabilisation parameters given in Theorem~\ref{thm:Lin:superclose}
   \[
    \enorm{I^N_p u-u^N}\leq C(h+k+N^{-1}\max |\psi'|)^{p+1/2}(\max|\psi'|\ln N)^{1/2}.
   \]
  \end{thm}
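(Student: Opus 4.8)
The plan is to read the estimate directly off the splitting \eqref{eq:GL:start}, which has already reduced the task to bounding three terms. This splitting is not ad hoc: it encodes the operator identity $\pi_p^N=I_p^N\pi_{p+1}^N$ obtained by globalising \eqref{eq:equivalence:1} of Lemma~\ref{lem:connection}. Indeed, writing $I_p^Nu-u^N=(\pi_p^Nu-u^N)+(I_p^Nu-\pi_p^Nu)$ and using $\pi_p^Nu=I_p^N\pi_{p+1}^Nu$ gives $I_p^Nu-\pi_p^Nu=-I_p^N(\pi_{p+1}^Nu-u)=-\big(R^Nu+(\pi_{p+1}^Nu-u)\big)$, so a single application of the triangle inequality produces the three summands in \eqref{eq:GL:start}. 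The whole strategy is therefore to show that the supercloseness term carries the order $p+1/2$, while the remaining two terms are of strictly higher order $p+1$, apart from a genuinely small residual.

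First I would bound the supercloseness term $\enorm{\pi_p^Nu-u^N}$ by Theorem~\ref{thm:Lin:superclose}, which under the stated stabilisation restrictions yields exactly $C(\hbar+\kbar+N^{-1}\max|\psi'|)^{p+1/2}\ln N$; this is the term that governs the final bound. Next I would treat the interpolation term $\enorm{\pi_{p+1}^Nu-u}$ by Theorem~\ref{thm:interpolation} applied to $\QS_{p+1}$-elements instead of $\QS_p$-elements. This is precisely the step that forces the hypothesis $\sigma\geq p+2$: raising the polynomial degree by one raises the required value of $\sigma$ by one, and in return we gain order $p+1$, namely $\enorm{\pi_{p+1}^Nu-u}\leq C(\hbar+\kbar+N^{-1}\max|\psi'|)^{p+1}$. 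Finally I would invoke Theorem~\ref{thm:Ru} (or, if the restriction $\kbar\leq CN^{-1/4}$ is not imposed, Remark~\ref{rem:Ru:nok}) to bound the middle term $\enorm{R^Nu}$ by $C\big(N^{-(\sigma-1/2)}(1+h_{mesh})+(\hbar+\kbar+N^{-1}\max|\psi'|)^{p+1}\big)$.

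It then remains to collect the three bounds and verify domination. The two contributions of order $p+1$, from the interpolation term and from the polynomial part of $\enorm{R^Nu}$, satisfy $(\hbar+\kbar+N^{-1}\max|\psi'|)^{p+1}\leq C(\hbar+\kbar+N^{-1}\max|\psi'|)^{p+1/2}$ because the mesh parameters are uniformly bounded, and are hence absorbed into the order-$(p+1/2)$ supercloseness term. For the $\eps$-free residual I would use the S-type mesh bound $h_{mesh}\leq C(\ln N)^{1/2}$, so that $N^{-(\sigma-1/2)}(1+h_{mesh})\leq CN^{-(\sigma-1/2)}(\ln N)^{1/2}$; with $\sigma\geq p+2$ this is $O\big(N^{-(p+3/2)}(\ln N)^{1/2}\big)$, which is of strictly higher order than the target, since $(\hbar+\kbar+N^{-1}\max|\psi'|)^{p+1/2}\ln N\geq CN^{-(p+1/2)}\ln N$. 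Summing the three pieces yields the asserted bound.

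I expect no serious obstacle here, as the heavy estimates are all delegated to Theorems~\ref{thm:Lin:superclose}, \ref{thm:interpolation} and \ref{thm:Ru}; this statement is essentially their corollary. The only genuine point of care is the bookkeeping of orders, specifically confirming that the residual $N^{-(\sigma-1/2)}(1+h_{mesh})$ coming from $R^Nu$ is truly of higher order than the supercloseness term on every admissible S-type mesh. This is where the mesh-dependent bounds for $h_{mesh}$ and the sharpened hypothesis $\sigma\geq p+2$ (inherited from the $\QS_{p+1}$ interpolation) do their work.
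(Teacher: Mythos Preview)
Your proposal is correct and follows essentially the same route as the paper: the splitting \eqref{eq:GL:start} into the supercloseness term, the $R^Nu$ term, and the $\QS_{p+1}$ interpolation term, each handled by the corresponding earlier result. The only minor difference is that the paper invokes Remark~\ref{rem:Ru:nok} rather than Theorem~\ref{thm:Ru} for the middle term, precisely because Theorem~\ref{thm:GL:superclose} is stated without the extra hypotheses $\kbar\leq CN^{-1/4}$ and $N^{-1}(\max|\psi'|)^2\leq C$; you noted this alternative yourself, so your bookkeeping paragraph should track the residual $N^{-(\sigma-2/3)}$ from the remark rather than $N^{-(\sigma-1/2)}(1+h_{mesh})$, which then needs only $\sigma\geq p+5/3$ to be absorbed.
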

  \begin{proof}
     Consider again \eqref{eq:GL:start}
     \[
      \enorm{I_p^Nu-u^N}
       \leq \enorm{\pi_p^Nu-u^N}
           +\enorm{I_p^N(\pi_{p+1}^Nu-u)-(\pi_{p+1}^Nu-u)}
           +\enorm{\pi_{p+1}^Nu-u}.
     \]
     Theorem~\ref{thm:Lin:superclose} gives under conditions on the stabilisation parameters and $\sigma\geq p+1$
     \[
      \enorm{\pi_p^Nu-u^N}\leq C(h+k+N^{-1}\max |\psi'|)^{p+1/2}(\max|\psi'|\ln N)^{1/2},
     \]
%
     Theorem~\ref{thm:Ru} gives for $\sigma\geq p+5/3$
     \[
      \enorm{I_p^N(\pi_{p+1}^Nu-u)-(\pi_{p+1}^Nu-u)}\leq C(h+k+N^{-1}\max|\psi'|)^{p+1},
     \]
%
     and Theorem~\ref{thm:interpolation} yields for $\sigma\geq p+2$
     \[
      \enorm{\pi_{p+1}^Nu-u}\leq C(h+k+N^{-1}\max |\psi'|)^{p+1}.
     \]
     Combining the three estimates completes the proof.
  \end{proof}
  \begin{rem}
   The analysis of this section shows an analytical supercloseness result of order $p+1/2$.
   In Section~\ref{sec:numerics} we will see a numerical supercloseness property of order $p+1$.
   In this sense, the analysis is not sharp.
   To our knowledge, the result of order $p+1/2$ for the SDFEM given in \cite{ST08, Fr11}
   is the best one for problems like~\eqref{eq:Lu}.
   The basic idea of the proof lies in estimating the convective term of the
   Galerkin bilinear form by the SDFEM norm. Here one order of convergence can
   be won, but half an order of the stabilisation parameter $\delta_{11}$ has
   to be spent.
   This leads to an improvement of only half an order.
  \end{rem}

\section{Superconvergence by Postprocessing}\label{sec:postprocessing}
   By utilising the supercloseness results of Theorems~\ref{thm:Lin:superclose} and~\ref{thm:GL:superclose}
   we can construct postprocessing operators. They improve our numerical solution with
   little additional computational effort to higher convergence order.

   \begin{figure}
   \begin{center}
     \begin{minipage}[c]{6cm}
      \vspace*{0cm}
      \setlength{\unitlength}{0.57pt}
      \begin{picture}(256,256)
       \multiput(  4,  0)(0,4){64}{\line(0,1){1}}
       \multiput( 12,  0)(0,4){64}{\line(0,1){1}}
       \multiput( 76,  0)(0,4){64}{\line(0,1){1}}
       \multiput(196,  0)(0,4){64}{\line(0,1){1}}

       \multiput(  0, 19)(4,0){64}{\line(1,0){1}}
       \multiput(  0, 83)(4,0){64}{\line(1,0){1}}
       \multiput(  0,173)(4,0){64}{\line(1,0){1}}
       \multiput(  0,237)(4,0){64}{\line(1,0){1}}

       \put(  0,  0){\line( 0, 1){256}}
       \put(  8,  0){\line( 0, 1){256}}
       \put( 16,  0){\line( 0, 1){256}}
       \put(136,  0){\line( 0, 1){256}}
       \put(256,  0){\line( 0, 1){256}}

       \put(  0,  0){\line( 1, 0){256}}
       \put(  0, 38){\line( 1, 0){256}}
       \put(  0,128){\line( 1, 0){256}}
       \put(  0,218){\line( 1, 0){256}}
       \put(  0,256){\line( 1, 0){256}}

       \put(-5,-5){\makebox(0,0)[t]{$0$}}
       \put(16,-5){\makebox(0,0)[t]{$\lambda_x$}}
       \put(-5,38){\makebox(0,0)[r]{$\lambda_y$}}
       \put(-5,218){\makebox(0,0)[r]{$1-\lambda_y$}}
       \put(256,-5){\makebox(0,0)[t]{$1$}}
       \put(-5,256){\makebox(0,0)[r]{$1$}}

      \end{picture}
     \end{minipage}
   \end{center}
   \caption{\label{fig:post:mesh}Macroelements $M$ of $\tilde T^{N/2}$
            constructed from $T^N$}
  \end{figure}
  Suppose $N$ is divisible by 8. We construct a coarser macro mesh $\tilde T^{N/2}$
  composed of macro rectangles $M$, each consisting of four rectangles
  of~$T^N$.
  The construction of these macro elements $M$ is done such that the union on them covers
  $\Omega$ and none of them crosses the transition lines at $x=\lambda_x$ and at $y=\lambda_y$ or $y=1-\lambda_y$,
  see Figure~\ref{fig:post:mesh}.
  Remark that in general $\tilde T^{N/2}\neq T^{N/2}$ due to different transition
  points $\lambda_x$ and $\lambda_y$ and the mesh generating function $\phi$.

  We now define postprocessing operators locally for one macro element $M\in\tilde T^{N/2}$.
  The first one was presented in 1d in \cite{Tob06} (for $p\geq 3$), a modification of an operator given in \cite{Lin91}.

  Let $\hat v$ be the linearly mapped function $v$ from any interval $[x_{i-1},x_{i+1}]$ onto the reference interval $[-1,1]$.
  Note that $x_i$ is not necessarily mapped onto $0$, but to a value $a\in(-1,1)$.
  In \cite{Franz08} the following condition on the underlying mesh is given, that guarantees
  the non-degenerate behaviour of the macro elements and of the operators defined on it:
  There exists a constant $q\geq1$ independent of $N$ and $\eps$ such that
   \begin{align*}
    \frac{\max\{h_i,h_{i+1}\}}{\min\{h_i,h_{i+1}\}}&\leq q,\quad
       \mbox{for all }i=1,\dots,N/2-1,\\
    \frac{\max\{k_j,k_{j+1}\}}{\min\{k_j,k_{j+1}\}}&\leq q,\quad
       \mbox{for all }j=1,\dots,N/4-1\mbox{ and }j=3N/4+1,\dots,N-1.
   \end{align*}
  A Shishkin mesh has $q=1$ while a Bakhvalov--Shishkin mesh has
  $q=\ln(3)/\ln(5/3)$. For many more S-type meshes this condition holds
  and we assume it further-on.
  
  Define the postprocessing operator\footnote{changed in March 2023 for $p\geq 2$, numerical experiments used already this definition} 
  $\widehat{P}_{vec}:C[-1,1]\to \PS_{p+1}[-1,1]$ by
  \begin{align*}
    \mbox{for $p=1$:}\hspace*{1cm}
    \widehat{P}_{vec} \hat v(-1)&=v(x_{i-1}),\qquad
    \widehat{P}_{vec} \hat v( a) =v(x_{i}),\qquad
    \widehat{P}_{vec} \hat v( 1) =v(x_{i+1}),\\
    \mbox{for $p=2$:}\hspace*{1cm}
    \widehat{P}_{vec} \hat v(-1)&=v(x_{i-1}),\qquad
    \widehat{P}_{vec} \hat v( 1) =v(x_{i+1}),\\
    \int_{-1}^{a} (\widehat{P}_{vec}\hat v-\hat v)&=0,\quad
    \int_{a}^1    (\widehat{P}_{vec}\hat v-\hat v) =0,\\
    \mbox{while for $p\geq 3$:}\hspace*{1cm}
    \widehat{P}_{vec} \hat v(-1)&=v(x_{i-1}),\qquad
    \widehat{P}_{vec} \hat v( a) =v(x_{i}),\qquad
    \widehat{P}_{vec} \hat v( 1) =v(x_{i+1}),\\
    \int_{-1}^{a} (\widehat{P}_{vec}\hat v-\hat v)&=0,\quad
    \int_{a}^1    (\widehat{P}_{vec}\hat v-\hat v) =0,\\
    \int_{-1}^1   (\widehat{P}_{vec}\hat v-\hat v)p&=0,\quad p\in \PS_{p-3}[-1,1]\setminus\R.
  \end{align*}

  By using the tensor product structure we obtain the full postprocessing operator 
  $P_{vec,M}:C(M)\to \QS_{p+1}(M)$ on each macro element.
  Then, this piecewise projection is extended to a global, continuous
  function by setting
  \begin{gather*}
     \bigl(P^{p+1}_{vec}v\bigr)(x,y) := \bigl(P_{vec,M} v\bigr)(x,y)
                \quad \text{for} \ (x,y)\in M.
  \end{gather*}

  The second postprocessing operator is defined by using only point evaluations.
  Let $M$ be the union of mesh cells $M_1$, $M_2$, $M_3$ and $M_4$. In each of them
  we sample the Gau\ss-Lobatto points. Due to the tensor structure
  we can order the $x$- and $y$-coordinates of those points and obtain the sequences $\{\tilde x_i\}$
  and $\{\tilde y_j\}$, $i,j=0,\dots,2p$.
  
  Let $P_{GL,M}:C(M)\to \QS_{p+1}(M)$ denote the projection/interpolation operator
  fulfilling
  \[
   P_{GL,M}v(\tilde x_i,\tilde y_j)=v(\tilde x_i,\tilde y_j),\quad i,j=0,\,1,\,3,\,5,\dots,2p-3,2p-1,\,2p.
  \]
  Then, this piecewise projection is extended to a global, continuous
  function by setting
  \begin{gather*}
     \bigl(P^{p+1}_{GL}v\bigr)(x,y) := \bigl(P_{GL,M} v\bigr)(x,y)
                \quad \text{for} \ (x,y)\in M.
  \end{gather*}

  \begin{lem}\label{lem:post}
   For the postprocessing operators $P^{p+1}_{GL}$ and $P^{p+1}_{vec}$ defined above holds
  \begin{subequations}
   \begin{align}
    P^{p+1}_{GL}I_p^Nv    &= P^{p+1}_{GL}v,&
    P^{p+1}_{vec}\pi_p^Nv &= P^{p+1}_{vec}v,&
    \mbox{for all }v\in C(\Omega),\label{eq:post:1}\\
    \enorm{P^{p+1}_{GL}v^N}  &\leq C\enorm{v^N},&
    \enorm{P^{p+1}_{vec}v^N} &\leq C\enorm{v^N},&
    \mbox{for all }v^N\in V^N\label{eq:post:2}.
   \end{align}
   Let $u$ be the solution of \eqref{eq:Lu}, Assumption~\ref{ass:dec} be true and $\sigma\geq p+2$.
   Then it holds
   \begin{align}
    \enorm{P^{p+1}_{GL}u-u}+
    \enorm{P^{p+1}_{vec}u-u}\leq C(h+k+N^{-1}\max|\psi'|)^{p+1}\label{eq:post:3}.
   \end{align}
   \end{subequations}
  \end{lem}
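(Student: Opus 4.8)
The plan is to establish the three groups of properties in turn, since \eqref{eq:post:1} and \eqref{eq:post:2} are local reference-element statements, while \eqref{eq:post:3} is a global approximation estimate that combines them with the solution decomposition. First I would prove the invariance identities \eqref{eq:post:1}. For $P^{p+1}_{GL}$, observe that its defining point-evaluation nodes $(\tilde x_i,\tilde y_j)$ with $i,j\in\{0,1,3,5,\dots,2p-3,2p-1,2p\}$ are precisely a subset of the Gau\ss-Lobatto nodes of the four constituent cells of $M$; since $I_p^N$ reproduces $v$ exactly at every one of those cell nodes, $I_p^Nv$ and $v$ agree at the sampling nodes of $P^{p+1}_{GL}$, giving $P^{p+1}_{GL}I_p^Nv=P^{p+1}_{GL}v$ immediately. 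For $P^{p+1}_{vec}$, the argument is the Gau\ss-Lobatto-quadrature trick already used in Lemma~\ref{lem:connection}: the vertex values of $P^{p+1}_{vec}$ coincide with the vertex values of $\pi_p^N$, and each edge/cell moment condition defining $P^{p+1}_{vec}(\pi_p^Nv-v)$ integrates a polynomial of degree $\le 2p-1$ against a test polynomial, which the Gau\ss-Lobatto rule evaluates exactly at the shared cell nodes where $\pi_p^N$ reproduces its own edge and cell moments — so all defining functionals of $P^{p+1}_{vec}$ annihilate $\pi_p^Nv-v$.

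Second, for the stability bounds \eqref{eq:post:2} I would work on a single fixed macroelement $M$ and pass to the reference configuration. Because the non-degeneracy condition (the bounded aspect ratios $h_i/h_{i+1}\le q$, $k_j/k_{j+1}\le q$) guarantees that the interior node $a\in(-1,1)$ stays bounded away from $\pm1$ uniformly in $N$ and $\eps$, the reference operators $\widehat P_{vec}$ and the Gau\ss-Lobatto macro-interpolant are bounded linear maps on a finite-dimensional space with constants depending only on $q$ and $p$. Mapping back with the standard scaling of $L_2$- and $H^1$-seminorms under $F_\tau$, and summing the squared local bounds over all $M\in\tilde T^{N/2}$, yields $\enorm{P^{p+1}v^N}\le C\enorm{v^N}$; the $\eps$- and $\gamma$-weights in $\enorm{\cdot}$ are carried through unchanged since both sides use the same norm on the same domain.

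Third, for the approximation estimate \eqref{eq:post:3} I would use that each operator reproduces $\QS_{p+1}(M)$ exactly, so for any local polynomial $q\in\QS_{p+1}(M)$ one may write $P^{p+1}u-u=(P^{p+1}-\mathrm{id})(u-q)$ and invoke the stability from \eqref{eq:post:2} together with anisotropic approximation error bounds on $M$, exactly as in the proof of Theorem~\ref{thm:interpolation}. Splitting $u=v+w_1+w_2+w_{12}$ via Assumption~\ref{ass:dec}, I would estimate the regular part directly with the mesh-size powers $h_i,k_j$, and bound each layer component using the pointwise derivative estimates \eqref{eq:dec:C0} together with \eqref{eq:hi} inside the fine parts of the mesh and the exponential decay of the layers (controlled by $\sigma\ge p+2$) on the coarse parts. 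I expect the main obstacle to be the approximation step, specifically the bookkeeping needed to verify that on the macro mesh $\tilde T^{N/2}$ every layer term contributes at most $C(\hbar+\kbar+N^{-1}\max|\psi'|)^{p+1}$: one must check that macroelements straddling the boundary between fine and coarse regions are excluded by the construction (the macro elements do not cross the transition lines), so that on each $M$ a single consistent mesh-size regime applies, and that the aspect-ratio constant $q$ does not degrade the anisotropic constants. The stability and invariance parts are comparatively routine once the reference-element picture is set up.
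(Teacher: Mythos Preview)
Your overall plan matches the paper's brief proof: consistency from the definitions, stability from a finite-dimensionality argument (the paper argues this directly on $M$ rather than passing to a reference configuration, but the aspect-ratio condition is needed either way to make the constant uniform in $N$ and $\eps$), and the approximation bound via the solution decomposition together with anisotropic interpolation estimates.

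There is, however, a real error in your consistency argument for $P^{p+1}_{vec}$. You propose to reuse the Gau\ss--Lobatto quadrature trick from Lemma~\ref{lem:connection}, claiming that the moment conditions of $P^{p+1}_{vec}$ integrate ``a polynomial of degree $\le 2p-1$''. But $\pi_p^Nv-v$ is \emph{not} a polynomial: $v$ is an arbitrary continuous function, so no exact quadrature rule applies to it. In Lemma~\ref{lem:connection} the trick worked because both $\hat\pi_{p+1}v$ and $\hat I_p\hat\pi_{p+1}v$ are polynomials; here one of the two terms is $v$ itself. The correct (and simpler) argument is direct: every defining functional of $\widehat P_{vec}$ is either a point evaluation at a mesh vertex of $T^N$, where $\pi_p^Nv=v$ by \eqref{eq:general:def_inter1}, or an integral over a single sub-interval of the macro element against a test polynomial of degree $\le p-2$, which vanishes by the edge-moment condition \eqref{eq:general:def_inter2} of $\pi_p^N$ on that cell (the global moment $\int_{-1}^1(\pi_p^Nv-v)q$ for $q\in P_{p-2}$ splits as $\int_{-1}^a+\int_a^1$, and $q$ restricted to each sub-interval still lies in $P_{p-2}$). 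No quadrature is needed.

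A smaller gap: in your approximation step you write $P^{p+1}u-u=(P^{p+1}-\mathrm{id})(u-q)$ and plan to invoke \eqref{eq:post:2}. But \eqref{eq:post:2} is only stated for $v^N\in V^N$, and $u-q$ is not discrete. What you actually need is stability of the reference operator on $C(\hat\tau)$ or $H^{p+2}(\hat\tau)$ (which holds, since all degrees of freedom are point values or integrals with uniformly bounded weights under the aspect-ratio condition), combined with the anisotropic scaling; this is the content of the references the paper cites for \eqref{eq:post:3}, not the discrete energy-norm stability.
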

  \begin{proof}
   The consistency \eqref{eq:post:1} is a direct consequence of the definitions of $P^{p+1}_{GL}$ and $P^{p+1}_{vec}$.
   The stability \eqref{eq:post:2} can be shown for both operators similarly. Therefore, let $P_M$
   be any of the local operators $P_{GL,M}$ and $P_{vec,M}$.
   The stability \eqref{eq:post:2} then follows by showing
   $\norm{P_Mv^N}{0,M}\leq C\norm{v^N}{0,M}$ and
   $\snorm{P_Mv^N}{1,M}\leq C\snorm{v^N}{1,M}$ for any $M\in\tilde T_{N/2}$ and any $v^N\in V^N(M)$.
   The operator $P_M$ is a linear operator from the finite dimensional space $V^N(M)$ into the finite
   dimensional space $\QS_{p+1}(M)$. Thus it is continuous and with $v^N\mapsto \norm{v^N}{0,M}$ being a
   norm in both spaces we obtain
   \[
    \norm{P_M v^N}{0,M}\leq C \norm{v^N}{0,M},\quad \mbox{for all }v^N\in V^N(M).
   \]
   Similarly $v^N\mapsto\snorm{v^N}{1,M}$ is a norm on the quotient space $V^N(M)\setminus\R$
   and $v^N\mapsto\norm{v^N}{1,M}$ is a norm on $\QS_{p+1}(M)$. Therefore,
   \[
    \snorm{P_M v^N}{1,M}
    \leq\norm{P_M v^N}{1,M}
    \leq C\snorm{v^N}{1,M}\quad \mbox{for all }v^N\in V^N(M)\setminus\R.
   \]
   Finally, the interpolation error \eqref{eq:post:3} follows by Assumption~\ref{ass:dec}
   and standard anisotropic estimates for interpolation~\cite{Apel99,FrM10_1}.
  \end{proof}

  \begin{thm}\label{thm:post}
   Let $\sigma\geq p+2$. Then it holds for the streamline-diffusion solution $u^N$
   under the restrictions on the stabilisation parameters given in Theorem~\ref{thm:Lin:superclose}
   \[
     \enorm{u-P^{p+1}_{GL}u^N}
    +\enorm{u-P^{p+1}_{vec}u^N}\leq C(h+k+N^{-1}\max|\psi'|)^{p+1/2}(\max|\psi'|\ln N)^{1/2}.
   \]
  \end{thm}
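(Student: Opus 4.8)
The plan is to combine the supercloseness estimate of Theorem~\ref{thm:GL:superclose} (respectively Theorem~\ref{thm:Lin:superclose}) with the approximation property \eqref{eq:post:3} and the stability \eqref{eq:post:2} of the postprocessing operators, using the consistency relation \eqref{eq:post:1} as the crucial link. I treat the two operators in parallel, since the argument is structurally identical; let me write it out for $P^{p+1}_{GL}$, the version for $P^{p+1}_{vec}$ being obtained by replacing $I_p^N$ with $\pi_p^N$ throughout.

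First I would apply the triangle inequality to split
\[
  \enorm{u-P^{p+1}_{GL}u^N}
   \leq \enorm{u-P^{p+1}_{GL}u}
       +\enorm{P^{p+1}_{GL}u-P^{p+1}_{GL}u^N}.
\]
The first term is bounded directly by \eqref{eq:post:3} by $C(\hbar+\kbar+N^{-1}\max|\psi'|)^{p+1}$, which is of higher order than the claimed bound. For the second term, the key observation is the consistency \eqref{eq:post:1}: since $P^{p+1}_{GL}I_p^N v = P^{p+1}_{GL} v$ for all $v\in C(\Omega)$, I may replace $P^{p+1}_{GL}u$ by $P^{p+1}_{GL}I_p^N u$ without changing its value, so that
\[
  \enorm{P^{p+1}_{GL}u-P^{p+1}_{GL}u^N}
   =\enorm{P^{p+1}_{GL}(I_p^N u-u^N)}.
\]
Now $I_p^N u - u^N\in V^N$, so the stability estimate \eqref{eq:post:2} applies and bounds this by $C\enorm{I_p^N u - u^N}$.

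It remains to invoke the supercloseness result of Theorem~\ref{thm:GL:superclose}, which (under $\sigma\geq p+2$ and the stated restrictions on the stabilisation parameters) gives exactly
\[
  \enorm{I_p^N u-u^N}\leq C(\hbar+\kbar+N^{-1}\max|\psi'|)^{p+1/2}\ln N.
\]
Collecting the two contributions, the postprocessing error term dominates, yielding the claimed bound; the $P^{p+1}_{vec}$ case is identical but uses $\pi_p^N$ together with Theorem~\ref{thm:Lin:superclose}.

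In truth there is no serious obstacle here: every ingredient has already been established earlier in the excerpt, and the proof is essentially an assembly of Lemma~\ref{lem:post} and the supercloseness theorems. The only point requiring a moment's care is ensuring the hypotheses line up — namely that $\sigma\geq p+2$ is compatible with the $\sigma\geq p+1$ needed for Theorem~\ref{thm:Lin:superclose} and the $\sigma\geq p+2$ needed for Theorem~\ref{thm:GL:superclose} and for \eqref{eq:post:3} — and that the stabilisation-parameter restrictions are precisely those carried over from Theorem~\ref{thm:Lin:superclose}. Since the stronger condition $\sigma\geq p+2$ subsumes the weaker ones, all hypotheses are simultaneously satisfied and the result follows directly.
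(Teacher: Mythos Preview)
Your proposal is correct and follows essentially the same route as the paper: triangle inequality, consistency \eqref{eq:post:1} to insert the interpolant, stability \eqref{eq:post:2} on the discrete difference, and then the supercloseness Theorems~\ref{thm:Lin:superclose} and~\ref{thm:GL:superclose} together with the approximation bound \eqref{eq:post:3}. Your additional remark on the compatibility of the $\sigma$-hypotheses is a nice touch that the paper leaves implicit.
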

  \begin{proof}
   Using the consistency and stability of $P^{p+1}_{GL}$ we obtain
   \begin{align*}
    \enorm{u-P^{p+1}_{GL}u^N}
    &\leq\enorm{u-P^{p+1}_{GL}u}+\enorm{P^{p+1}_{GL}I^N_p u-P^{p+1}_{GL}u^N}\\
    &\leq\enorm{u-P^{p+1}_{GL}u}+C\enorm{I^N_p u-u^N}.
   \end{align*}
   Similarly one can show
   \[
    \enorm{u-P^{p+1}_{vec}u^N}
     \leq\enorm{u-P^{p+1}_{vec}u}+C\enorm{\pi^N_p u-u^N}.
   \]
   Now the statement follows by \eqref{eq:post:3} and the supercloseness results of Theorems~\ref{thm:Lin:superclose}
   and~\ref{thm:GL:superclose}.
  \end{proof}

\section{Numerical Example}\label{sec:numerics}

   Let us consider the singularly perturbed problem given by
   \begin{align*}
      -\eps \laplace u - (2-x) u_x + 3/2 u &= f,\quad\mbox{in }\Omega=(0,1)^2\\
      u&=0,\quad\mbox{on }\partial\Omega,
   \end{align*}
   with a constructed right-hand side, such that
   \[
      u(x,y) = \left(\cos(x\pi/2)-\frac{e^{-x/\eps}-e^{-1/\eps}}{1-e^{-1/\eps}}\right)
               \frac{(1-e^{-y/\eps^{1/2}})(1-e^{-(1-y)/\eps^{1/2})}}{1-e^{-1/\eps^{1/2}}}
   \]
   is the exact solution.
   
   The following calculations were done in Matlab and the linear systems solved by its
   direct ``backslash-solver''. We fix  the polynomial degree to $p=3$ and the parameter
   for the Shishkin mesh to $\sigma=p+2=5$. Moreover, we set $\eps=10^{-6}$, sufficiently small
   to generate the sharp boundary layers we are interested in. Note that additional computations
   were done with different polynomial degrees $p$ and varied perturbation parameters $\eps$
   supporting the same conclusions.
  \begin{table}[tbp]
   \begin{center}
    \caption{Convergence and closeness errors for $Q_3$-elements on a Shishkin mesh for $\eps=10^{-6}$
             with corresponding rates $r_N^S$, expected rates are 3 and 3.5
             \label{tab:conv:S}}
    \begin{tabular}{r cr cr cr cr}
           \multicolumn{1}{r}{N}\rule{0pt}{1.1em}
         & \multicolumn{2}{c}{$\enorm{u-u^N}$}
         & \multicolumn{2}{c}{$\enorm{\pi_3^N u-u^N}$}
         & \multicolumn{2}{c}{$\enorm{I_3^Nu-u^N}$}
         & \multicolumn{2}{c}{$\enorm{J_3^Nu-u^N}$}\\
     \hline\rule{0pt}{1.1em}
      8 & 2.270e-02 & 2.60 & 8.259e-03 & 2.55 & 9.587e-03 & 2.72 & 9.811e-03 & 2.68\\
     16 & 7.926e-03 & 2.79 & 2.940e-03 & 2.71 & 3.184e-03 & 2.81 & 3.309e-03 & 2.79\\
     32 & 2.141e-03 & 2.96 & 8.233e-04 & 3.28 & 8.515e-04 & 3.31 & 8.923e-04 & 3.24\\
     64 & 4.723e-04 & 3.04 & 1.544e-04 & 4.25 & 1.572e-04 & 4.24 & 1.707e-04 & 3.83\\
    128 & 9.156e-05 & 3.01 & 1.563e-05 & 4.53 & 1.600e-05 & 4.50 & 2.169e-05 & 3.46\\
    256 & 1.699e-05 &      & 1.240e-06 &      & 1.292e-06 &      & 3.122e-06 &
    \end{tabular}
   \end{center}
  \end{table}
  \begin{table}[tbp]
   \begin{center}
    \caption{Convergence and closeness errors for $Q_3$-elements on a Bakhvalov--Shishkin mesh for $\eps=10^{-6}$
             with corresponding rates $r_N^B$, expected rates are 3 and 3.5
             \label{tab:conv:BS}}
    \begin{tabular}{r cr cr cr cr}
           \multicolumn{1}{r}{N}\rule{0pt}{1.1em}
         & \multicolumn{2}{c}{$\enorm{u-u^N}$}
         & \multicolumn{2}{c}{$\enorm{\pi_3^N u-u^N}$}
         & \multicolumn{2}{c}{$\enorm{I_3^Nu-u^N}$}
         & \multicolumn{2}{c}{$\enorm{J_3^Nu-u^N}$}\\
     \hline\rule{0pt}{1.1em}
      8 & 4.104e-03 & 2.74 & 1.673e-03 & 3.07 & 1.774e-03 & 3.11 & 1.828e-03 & 3.05\\
     16 & 6.145e-04 & 2.92 & 1.986e-04 & 4.22 & 2.055e-04 & 4.18 & 2.207e-04 & 3.67\\
     32 & 8.106e-05 & 2.94 & 1.065e-05 & 4.33 & 1.137e-05 & 4.25 & 1.729e-05 & 3.21\\
     64 & 1.055e-05 & 2.97 & 5.309e-07 & 4.17 & 5.960e-07 & 4.11 & 1.868e-06 & 3.02\\
    128 & 1.349e-06 & 2.98 & 2.950e-08 & 4.08 & 3.440e-08 & 4.05 & 2.310e-07 & 2.99\\
    256 & 1.707e-07 &      & 1.742e-09 &      & 2.077e-09 &      & 2.902e-08 &
        \end{tabular}
   \end{center}
  \end{table}
   
   In the following tables, the experimental rates of of convergence for given measured errors $e_N$
   are calculated by
   \[
    p_N^S=\frac{\ln(e_N/e_{2N})}{\ln(2\ln(N)/\ln(2N))},\quad
    p_N^B=\frac{\ln(e_N/e_{2N})}{\ln(2)},
   \]
   assuming $e_N=C(N^{-1}\ln N)^{p_N^S}$ on a Shishkin mesh and $e_N=CN^{-p_N^B}$ on a Bakhvalov--Shishkin mesh.
   
   The numerical method is the SDFEM given in \eqref{eq:SD_form} with stabilisation 
   parameters according to the upper bounds in Theorem~\ref{thm:Lin:superclose}, where $C$ is set to $1$.
   
   Tables~\ref{tab:conv:S} and \ref{tab:conv:BS}
   present the convergence and closeness results. We observe third order convergence of the
   numerical method as predicted by Corollary~\ref{cor:convergence}.
   Moreover, the results on a graded mesh like the Bakhvalov--Shishkin mesh are much better
   compared with the piecewise equidistant Shishkin mesh. There are two orders of magnitude
   difference in the final line for the energy error.
   
   The closeness results for the vertex-edge-cell interpolant $\pi_3^N$ and
   the Gau\ss-Lobatto interpolant $I_3^N$ as well as a pointwise interpolation operator 
   $J_3^N$ using equidistantly spaced interpolation points are also given. Supercloseness of 
   a better order than $p+1/2=3.5$ can clearly be seen for $\pi_3^N$ and $I_3^N$, whereas
   for $J_3^N$ the rate is not as high. On the Bakhvalov--Shishkin mesh, we observe a clear order 4
   for the first two operators and an order 3 for the equidistant interpolant.

   Table~\ref{tab:post:S} 
  \begin{table}[tbp]
   \begin{center}
    \caption{Superconvergence errors for $Q_3$-elements on a Shishkin mesh 
             and a Bakhvalov--Shishkin mesh with corresponding rates $r_N^S$ and $r_N^B$, resp., for $\eps=10^{-6}$, expected rates are 3.5
             \label{tab:post:S}}
    \begin{tabular}{r cr cr| cr cr}
         & \multicolumn{4}{c|}{Shishkin} &
           \multicolumn{4}{c}{Bakhvalov-Shishkin}\\
           \multicolumn{1}{r}{N}\rule{0pt}{1.1em}
         & \multicolumn{2}{c}{$\enorm{u-P_{vec}^Nu^N}$}
         & \multicolumn{2}{c|}{$\enorm{u-P_{GL}^Nu^N}$}
         & \multicolumn{2}{c}{$\enorm{u-P_{vec}^Nu^N}$}
         & \multicolumn{2}{c}{$\enorm{u-P_{GL}^Nu^N}$}\\
     \hline\rule{0pt}{1.1em}
      8 & 2.940e-02 & 2.79 & 2.883e-02 & 2.77 & 1.071e-02 & 3.58 & 1.282e-02 & 4.10\\
     16 & 9.471e-03 & 3.16 & 9.362e-03 & 3.21 & 8.965e-04 & 3.99 & 9.771e-04 & 3.71\\
     32 & 2.142e-03 & 3.54 & 2.069e-03 & 3.59 & 5.633e-05 & 4.07 & 6.141e-05 & 3.99\\
     64 & 3.511e-04 & 4.03 & 3.298e-04 & 3.96 & 3.343e-06 & 4.06 & 3.587e-06 & 4.09\\
    128 & 4.001e-05 & 4.11 & 3.892e-05 & 4.06 & 2.003e-07 & 4.04 & 2.110e-07 & 4.06\\
    256 & 4.005e-06 &      & 4.005e-06 &      & 1.217e-08 &      & 1.266e-08 &   
    \end{tabular}
   \end{center}
  \end{table}   
   %
   %
   now shows the results of the postprocessed numerical solutions. We observe for both postprocessing operators
   on both meshes convergence rates of order $p+1=4$, which compared with the convergence results presented in
   Tables~\ref{tab:conv:S} and \ref{tab:conv:BS}
   is an increase of a full order. Theorem~\ref{thm:post} only predicted half an order increase. 
   Thus, it seems that the supercloseness result of Theorem~\ref{thm:Lin:superclose}
   is not sharp. By improving this estimate, the improvement of Theorem~\ref{thm:post} would follow immediately.

   By comparing the results of the two operators, we observe very little difference for larger values of $N$.
   Thus the new postprocessing operator using only point values has comparable convergence properties to the
   already existing one that uses integral values.\\

   \textbf{Acknowledgement}: The author would like to thank Lars Ludwig for carefully reading the manuscript
   and suggesting improvements to the readability of the paper.
   
  \bibliographystyle{plain}
  \bibliography{superGL}

\end{document}